\makeatletter \@addtoreset{equation}{section}
\numberwithin{equation}{section}
\theoremstyle{plain}
\newtheorem{theorem}{Theorem}[section]
\newtheorem{prop}[theorem]{Proposition}
\theoremstyle{definition}
\newtheorem{remark}[theorem]{Remark}
\newcommand{\R}{\mathbb R}
\newcommand{\N}{\mathbb N}
\newcommand{\e}{\varepsilon}
\newcommand{\HH}{{\mathcal H}}
\def\P{\mathcal P}
\def\HH{\mathcal H}
\def\avint{\mathop{\,\rlap{--}\hspace{-.15cm}\int}\nolimits}
\DeclareMathOperator\supp{supp}
\newcommand{\mres}{\mathbin{\vrule height 1.6ex depth 0pt width
0.13ex\vrule height 0.13ex depth 0pt width 1.3ex}}
\title[The spheroid law]{The equilibrium measure for an \\anisotropic nonlocal energy}
\author[J.~A. Carrillo]{J.~A. Carrillo}
\author[J. Mateu]{J. Mateu}
\author[M.~G. Mora]{M.~G. Mora}
\author[L. Rondi]{L. Rondi}
\author[L. Scardia]{L. Scardia}
\author[J. Verdera]{J. Verdera}
\address[J.A. Carrillo]{Department of Mathematics, Imperial College London, United Kingdom}
\email{carrillo@imperial.ac.uk}
\address[J. Mateu]{Department de Matem\`atiques, Universitat Aut\`onoma de Barcelona, Catalonia}
\email{mateu@mat.uab.cat}
\address[M.G. Mora]{Dipartimento di Matematica, Universit\`a di Pavia, Italy}
\email{mariagiovanna.mora@unipv.it}
\address[L. Rondi]{Dipartimento di Matematica, Universit\`a di Milano, Italy}
\email{luca.rondi@unimi.it}
\address[L. Scardia]{Department of Mathematics, Heriot-Watt University, United Kingdom}
\email{L.Scardia@hw.ac.uk}
\address[J. Verdera]{Department de Matem\`atiques, Universitat Aut\`onoma de Barcelona, Catalonia}
\email{jvm@mat.uab.cat}
\begin{document}

\begin{abstract}
In this paper we characterise the minimisers of a one-parameter family of nonlocal and anisotropic energies $I_\alpha$ defined on probability measures in $\R^n$, with $n\geq 3$. The energy $I_\alpha$ consists of a purely nonlocal term of convolution type, whose interaction kernel reduces to the Coulomb potential for $\alpha=0$ and is anisotropic otherwise, and a quadratic confinement. The two-dimensional case arises in the study of defects in metals and has been solved by the authors by means of complex-analysis techniques.
We prove that for $\alpha\in (-1, n-2]$, the minimiser of $I_\alpha$ is unique and is the (normalised) characteristic function of a spheroid. This result is a paradigmatic example of the role of the anisotropy of the kernel on the shape of minimisers. In particular, the phenomenon of loss of dimensionality, observed in dimension $n=2$, does not occur in higher dimension at the value $\alpha=n-2$ corresponding to the sign change of the Fourier transform of the interaction potential. 

\bigskip

\noindent\textbf{AMS 2010 Mathematics Subject Classification:}  31A15 (primary); 49K20 (secondary).

\medskip

\noindent \textbf{Keywords:} nonlocal interactions; potential theory; global minimisers.
\end{abstract}

\maketitle

\begin{section}{Introduction}

There is a vast and multi-disciplinary literature on nonlocal energies, as they are at the crossover of different mathematical fields, and of different applications. Nonlocal energies arise as the macroscopic limit of long-range discrete interactions in the many-particle limit. The expression of the interaction kernel and its properties depend on the particle system of interest: it can model attraction, repulsion or a combination of both; it can be bounded or singular; it can be radial or anisotropic. 

The mathematical literature on nonlocal energies has been mainly focused on the case of radial potentials, which model interactions depending on the mutual distance between the particles only (see, e.g., \cite{BCLR, CCV, CDM, CDP, CHVY, CH, FYK11,SST}). In many applications, however, radial potentials are not realistic, and interactions may depend not only on the inter-particle distance, but also on the angle between their position vector and a given, \textit{preferred} direction (see, e.g., \cite{Carola, CCHS, Kunz, Seidl}). This is for instance the case for many biological systems, e.g., crowds, flocks of birds, schools of fish.

In materials science, some defects in metals, like \textit{dislocations} of edge type, interact via an \textit{anisotropic} potential, and this is the particle system that motivates our work. The anisotropy of the interactions is due to the motion of each dislocation being restricted to a given direction (the Burgers' vector, $\mathbf{b}$, of the dislocation), which is reminiscent of the metal's microscopic lattice structure. Under the simplifying assumption that dislocations are all parallel to each other, and have $\mathbf{b}=\mathbf{e_1}$, they can be modelled as point defects in two dimensions, and their interaction potential is 
$$
W_{\textrm{edge}}(x) = -\log|x| + \frac{x_1^2}{|x|^2}, \quad x\neq 0, \, x=(x_1, x_2)\in\R^2,
$$
see, e.g., \cite{HL}. The anisotropy of the interactions results into anisotropic low-energy dislocation structures (LEDS), dislocation walls in particular. The minimality of vertical one-dimensional structures (walls) was a long-standing conjecture in the engineering literature, and was recently proved in \cite{MRS} for the nonlocal dislocation energy
\begin{equation}\label{e:edge}
I_{\textrm{edge}}(\mu) = \iint_{\mathbb{R}^2\times \mathbb{R}^2}
W_{\textrm{edge}}(x-y) \,d\mu(x) \,d\mu(y) + \int_{\mathbb{R}^2}
|x|^2  \,d\mu(x)
\end{equation}
defined on probability measures $\mu\in \mathcal{P}(\R^2)$ representing the density of defects. For the derivation of an interaction energy related to \eqref{e:edge} (but in a bounded domain) from a semi-discrete strain energy we refer to \cite{MPS}. We also mention the recent work \cite{LL} where, starting from the nonlinear version of the strain-energy model considered in \cite{MPS}, `low-angle grain boundaries' (like vertical walls) are shown to have the optimal energy scaling in accordance with the celebrated Read-Shockley formula.

In this paper we study an $n$-dimensional generalisation of \eqref{e:edge}. More precisely, we consider the family of nonlocal energies
\begin{equation}\label{ce}
I_{\alpha}(\mu) = \iint_{\mathbb{R}^n\times \mathbb{R}^n}
W_{\alpha}(x-y) \,d\mu(x) \,d\mu(y) + \int_{\mathbb{R}^n}
|x|^2  \,d\mu(x)
\end{equation}
defined on $\mu\in \mathcal{P}(\R^n)$, where the interaction potential $W_{\alpha}$ is given by
\begin{equation}\label{V:int0}
W_{\alpha}(x) = W_0(x) + \alpha\frac{x_1^2}{|x|^n}, \qquad  
W_0(x)=
\begin{cases}
\displaystyle-\log|x| \quad & \textrm{if } n=2,\\
\medskip
\displaystyle\frac{1}{|x|^{n-2}} &\textrm{if } n \geq 3,
\end{cases}
\end{equation}
for $x\neq0$, $W_\alpha(0)=+\infty$, with $x=(x_1,\dots, x_n)\in\R^n$, and $\alpha> -1$. Note that $W_{1}=W_{\textrm{edge}}$ for $n=2$. The two-dimensional case was considered in \cite{CMMRSV, MRS} for every $\alpha\in \R$.

The main result of this paper is Theorem~\ref{characterisation}, where we show that, if $n\geq 3$ and $\alpha\in(-1,n-2]$, then 
the functional $I_\alpha$ has a unique minimiser $\mu_\alpha$ in $\P(\R^n)$ and $\mu_\alpha$ is of the form
$$
\mu_\alpha:= \frac{1}{| \Omega_\alpha|} \, \chi_{ \Omega_\alpha}, \quad \Omega_\alpha=  
\left\{x=(x_1,\dots,x_n)\in \R^n: \ \frac{x_1^2}{a(\alpha)^2} + \frac{1}{b(\alpha)^2}\sum_{i=2}^nx_i^2 \leq 1\right\},
$$
for some $a(\alpha), b(\alpha)>0$. In other words, the minimiser of $I_\alpha$ is the (normalised) characteristic function of a $n$-dimensional 
spheroid. Moreover, the spheroid is prolate for $\alpha\in (-1,0)$ (that is, $a(\alpha)>b(\alpha)$) and oblate for $\alpha\in (0,n-2]$ (that is, $a(\alpha)<b(\alpha)$).

\subsection{Our approach and discussion.}
In most of the cases treated in the mathematical literature on nonlocal systems, the interaction kernel is assumed to be radial, and 
one of the goals is to show that the corresponding minimiser is radially symmetric (or to show that the minimiser is unique, which trivially implies its radial symmetry), see e.g. \cite{BCLR, CCV, CDM, CDP, CHVY, CH, FYK11,SST}. Radial symmetry is paramount in the identification of the minimiser in the classical case of purely Coulomb interactions, corresponding in our setting to $\alpha=0$ (see \cite{Fro} and \cite{SaTo} for $n=2$, and  \cite{CGZ} for $n\geq 3$). Explicity characterising the minimiser, or even understanding its shape and general properties, is therefore much more challenging in the case of anisotropic interactions. To the best of our knowledge, this has been previously done only in \cite{MRS, CMMRSV}, in dimension $n=2$.

Our result generalises to any dimension $n\geq 3$ the work \cite{CMMRSV} and is another paradigmatic example of the role of the anisotropy of the kernel on the shape of minimisers. 

A key step in \cite{CMMRSV} was to compute exactly the (gradient of the) potential $W_\alpha\ast \mu_{a,b}$, with $\mu_{a,b}$ being the (normalised) characteristic function of an ellipse of semi-axes $a$ and $b$, and to impose the Euler-Lagrange conditions associated to $I_\alpha$. 
Also for $n\geq 3$ we prove the minimality of spheroids via the Euler-Lagrange conditions (see \eqref{EL-1}--\eqref{EL-2}), in the range $\alpha\in (-1,n-2]$ for which $I_\alpha$ is both well-defined and strictly convex (hence, the Euler-Lagrange conditions are necessary and sufficient for minimality). To do so, we need to compute the potential $W_\alpha\ast \mu_{a,b}$, with $\mu_{a,b}$ being the (normalised) characteristic function of a spheroid $\Omega(a,b)$ of semi-axis $a$ in the $x_1$-direction and $b$ in all the other directions. The computation of $W_\alpha\ast \mu_{a,b}$ for $n\geq 3$ is substantially different from the two-dimensional case in \cite{CMMRSV}. For $n=2$ it was crucial to rewrite the potential in complex variables and recognise that $\nabla W_0\ast \mu_{a,b}$ is the Cauchy transform of the ellipse, which had been computed for instance in \cite{Joans} for rotating vortex patches in fluid dynamics. Then the gradient of the anisotropic part of the potential in complex coordinates was computed by noting that it could be written as a suitable complex derivative of the fundamental solution of the operator $\partial^2$, where $\partial = \partial/\partial z$.

Such complex-analysis techniques are clearly not available in the higher-dimensional case, which we tackle here by means of the following strategy. We write $\Phi_\alpha:=W_\alpha\ast \mu_{a,b}$, and $\alpha\Psi:= \Phi_\alpha-\Phi_0$.  First of all, the expression of the Coulomb potential $\Phi_0$ of a spheroid in $\R^n$ is well-known  (see, e.g., \cite{Difratta, Kellog}). The challenge is to express the anisotropic potential $\Psi$ in terms of the known potential $\Phi_0$. We do it differently in $\Omega(a,b)$ and outside $\Omega(a,b)$. In $\Omega(a,b)$ we show that $\Psi$ can be obtained by differentiating $\Phi_0$ with respect to the aspect ratio $a^2/b^2$ of the spheroid. In $\R^n\setminus \Omega(a,b)$, instead, $\Psi$ is obtained by differentiating $\Phi_0$ with respect to a parameter spanning a family of spheroids confocal with $\Omega(a,b)$, using the fact that the expression of $\Phi_0$ is invariant on confocal spheroids (see \eqref{derivative=0-n}). 

With the expression of $\Phi_\alpha$ at hand, we then impose the Euler-Lagrange conditions \eqref{EL-1}--\eqref{EL-2}. We find that the first condition is satisfied, for $\alpha\in (-1,n-2]$, by at least a pair $(a(\alpha),b(\alpha))$ of semi-axes, with $a(\alpha)>b(\alpha)>0$ for $\alpha\in (-1,0)$ and $0<a(\alpha)<b(\alpha)$ for $\alpha\in (0,n-2]$. Hence there is at least one stationary, non-degenerate spheroid $\Omega(a(\alpha),b(\alpha))$ for the energy $I_\alpha$. We then show that, for any spheroid $\Omega(a(\alpha),b(\alpha))$ for which the stationarity condition \eqref{EL-1} is satisfied, also the 
unilateral condition \eqref{EL-2} is satisfied. Since \eqref{EL-1}--\eqref{EL-2} are necessary and sufficient conditions for minimality, this implies that any spheroid $\Omega(a(\alpha),b(\alpha))$ satisfying \eqref{EL-1} is in fact a minimiser for $I_\alpha$. The strict convexity of the energy then gives uniqueness of the minimiser, and in particular implies that there is only one spheroid satisfying~\eqref{EL-1}. This approach can be carried out also in the two-dimensional case (see \cite{Scag}).    

\subsubsection{Dimensionality of minimisers for $n=2$ and $n\geq 3$.} For the energy \eqref{e:edge}, it was shown in \cite{MRS} that the unique minimiser is one dimensional, and is given by the semi-circle law on the vertical axis,
$$
\mu_{\textrm{edge}} = \frac{1}{\pi}\delta_0\otimes \sqrt{2-x_2^2} \, \mathcal{H}^1\mres(-\sqrt2,\sqrt2).
$$
The semi-circle law also arises as the unique minimiser of the one-dimensional logarithmic energy with quadratic confinement (see \cite{Wi}), and represents in that case the optimal positions of the eigenvalues of a Hermitian random matrix with Gaussian entries. 

We recall that the minimiser of the Coulomb-gas energy $I_0$ for $n=2$ is the two-dimensional measure $\mu_0 = \frac1\pi \chi_{B_1(0)}$, the so-called circle-law, also well-known in the context of random matrices. In fact, the minimiser of $I_0$ is the normalised characteristic function of a ball in any dimension. The change of dimension of the minimiser of the energy $I_\alpha$, for $n=2$, between $\alpha=0$ and $\alpha=1$ was investigated by the authors in \cite{CMMRSV}. In \cite{CMMRSV} it was shown in particular that the minimiser of $I_\alpha$, for $n=2$ and $\alpha\in (-1,1)$, is the two-dimensional measure 
$$
\mu_\alpha = \frac1\pi \frac{1}{\sqrt{1-\alpha^2}} \chi_{\Omega(\sqrt{1-\alpha},\sqrt{1+\alpha})}, 
$$
with
\begin{equation*}
\Omega(\sqrt{1-\alpha},\sqrt{1+\alpha})=\left\{x=(x_1,x_2)\in\R^2:\
\frac{x_1^2}{1-\alpha}+\frac{x_2^2}{1+\alpha}<1\right\}.
\end{equation*}
Hence the minimiser of $I_\alpha$ has full dimension for $\alpha\in (-1,1)$, and is one-dimensional for both $\alpha\leq -1$ and $\alpha\geq 1$, being respectively the semi-circle law on the horizontal or the vertical axis. 

A question left open in \cite{CMMRSV} was to understand why there is a change of dimension of the minimiser $\mu_\alpha$ at $\alpha=\pm1$, for $n=2$. The relation between the dimensionality of the minimiser of a nonlocal energy and the singularity of the interaction kernel is a fascinating and subtle problem. The available results in the literature are usually of the form of a lower bound for the dimension of the measure (see, e.g., \cite{BCLR}), which is helpful if the goal is to prove that the dimension is full, but less so to prove that there is a loss of dimension.

For the energy $I_\alpha$ in dimension $n=2$, since the Fourier transform of $W_\alpha$ changes sign exactly at the values $\alpha=\pm1$, it was natural to conjecture that the change of dimension could be due to the change of sign of $\widehat{W}_\alpha$. Similarly, since 
the Laplacian of $W_\alpha$ is
\begin{align*}
\Delta W_\alpha(x) =   -(1-\alpha)\partial_{x_1}^2 \log|x| - (1+\alpha)\partial_{x_2}^2 \log|x|,
\end{align*}
it was reasonable to expect that the singular behaviour exhibited by $\mu_\alpha$ at $\alpha=\pm1$ was a consequence of the `degeneracy' of $\Delta W_\alpha$ at those values.

The analysis done in this paper demonstrates that the situation is more delicate. While it is still plausible to expect that a positive Fourier transform (or a non-degenerate Laplacian) results into a fully-dimensional minimiser, the contrary is not true, at least for $n\geq 3$. Indeed while for $n\geq 3$ the Fourier transform $\widehat{W}_\alpha$ of the interaction kernel changes sign at $\alpha=n-2$ (see \eqref{FT3d}), and similarly
$$
\Delta W_\alpha(x) =\bigg(1-\frac{\alpha}{n-2}\bigg) \partial_{x_1}^2\bigg(\frac1{|x|^{n-2}}\bigg) + \bigg(1+\frac{\alpha}{n-2}\bigg) \sum_{i=2}^n\partial_{x_i}^2\bigg(\frac1{|x|^{n-2}}\bigg),
$$
we prove that the minimiser of $I_{\alpha}$ is the characteristic function of a non-degenerate spheroid also for the limit value $\alpha=n-2$.

\subsubsection{The shape of minimisers for $\alpha>0$ and $\alpha<0$.}
In the two-dimensional case $n=2$, changing sign to $\alpha$ corresponds to swapping $x_1$ and $x_2$ (up to a constant in the energy), 
due to the zero-homogeneity of the energy. Hence it is sufficient to characterise the minimisers of $I_\alpha$ for $\alpha>0$, which is what we did in \cite{CMMRSV}.

This is no longer true for $n\geq 3$, since in this case there is only one privileged coordinate.  
Intuitively, configurations elongated on $x_1$ are penalised for $\alpha>0$, and preferred for $\alpha<0$ (see, e.g., \eqref{rewrite:W}), hence the minimisers for $\alpha>0$ and $\alpha<0$ 
cannot be congruent up to a rotation, which is the case in dimension two.
More precisely, for $n\geq3$ we may write
$$
W_{\alpha}(x) = (1+\alpha) W_0(x) -\alpha \frac{1}{|x|^n} \sum_{i=2}^n x_i^2. 
$$
Thus, changing sign to $\alpha$ corresponds not only to a change in the anisotropy, but also to a rescaling of the Coulomb kernel. 
This also suggests that a different behaviour of the energy should be expected at $\alpha=-1$ in dimension $n=2$ and $n\geq3$, as discussed next.

\subsubsection{The limiting case $\alpha=-1$} In dimension $n\geq 3$, for $\alpha=-1$ the anisotropy cancels completely the $x_1$-component of the Coulomb potential, since from \eqref{V:int0}
$$
W_{-1}(x) = \frac{1}{|x|^{n-2}} - \frac{x_1^2}{|x|^n} = \frac{x_2^2+\ldots+x_n^2}{|x|^n}, \quad x\neq 0.
$$ 
As a consequence, there is a discrepancy with the situation for $\alpha\in (-1,n-2]$: While $I_\alpha$ is lower semicontinuous for any $\alpha\in (-1,n-2]$, the functional $I_{-1}$ with the kernel $W_{-1}$ above (and $W_{-1}(0)=+\infty$) is not lower semicontinuous (see Remark~\ref{alfaneg}). In particular, $I_{-1}$ does not describe the asymptotic behaviour
of the functionals $I_\alpha$, as $\alpha\to-1^+$ (see Remark~\ref{asymrem}).
We resolve this issue in Section~\ref{Section-1}, where we characterise the $\Gamma$-limit $J_\ast$ of $I_\alpha$, as $\alpha\to-1^+$,
in Fourier space, on probability measures with compact support. This partial representation allows us to show strict convexity of $J_\ast$ on a class of measures that is the relevant one for minimisation, and hence to deduce uniqueness of the minimiser for $J_\ast$. Moreover, we show that the minimiser is, also in this limiting case, the (normalised) characteristic function of an $n$-dimensional (prolate) spheroid.

\subsection{Open questions and future work.}
There are several questions that we will address in future work. 
We believe that ellipses, or spheroids, arise as minimisers of more general anisotropic energies. A first step would be to consider interaction kernels of the form $W_0 + \alpha W_{\textrm{aniso}}$, with 
$$
W_{\textrm{aniso}}(x) = \frac{x_1^{2\kappa}}{|x|^{n-2+2\kappa}}, \quad \kappa\in \N.
$$
It is plausible to expect that in the two-dimensional case $n=2$ and in a suitable range of $\alpha$ the minimisers are ellipses. It would be interesting to understand whether, as for $\kappa=1$, they shrink to a segment for some special value $\alpha^\ast$ within, or at the boundary of, the interval of strict convexity of the corresponding energy (or of positivity of the Fourier transform of $W_0 + \alpha W_{\textrm{aniso}}$). This analysis would help to shed more light on what causes the loss of dimension of the minimising measure.

\end{section}

%%%%%%%%%%%%%%%%%%%%%%%%%%%%%%%%%%%%%%%%%%%%

\begin{section}{Existence and uniqueness of the minimiser of $I_{\alpha}$ for $\alpha\in (-1,n-2]$, $n\geq 3$}\label{preliminary-section}

In this section we prove that for every $\alpha\in (-1,n-2]$ the nonlocal
energy $I_{\alpha}$ defined in \eqref{ce}, for $n\geq 3$, has a unique minimiser
$\mu_\alpha\in \mathcal{P}(\mathbb{R}^n)$, and that the minimiser
has a compact support.

%In what follows we assume the kernel $W_{\alpha}$ to be extended to
%the whole of $\R^n$ by continuity, that is, we set
%$W_{\alpha}(0):=+\infty$. 

\begin{prop}\label{exist+uniq}
Let $n\geq 3$, and let $\alpha\in (-1,n-2]$. Then the energy $I_{\alpha}$ is well defined
on $\mathcal{P}(\mathbb{R}^n)$, is strictly convex on the class of
measures with compact support and finite interaction energy, and has
a unique minimiser in $\mathcal{P}(\mathbb{R}^n)$. Moreover, the
minimiser has compact support and finite energy.
\end{prop}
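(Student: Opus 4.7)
The plan is to establish the four claims of the proposition in the following order: well-definedness, existence, compact support of any minimiser, and finally strict convexity together with uniqueness. For well-definedness, I would first verify that $W_\alpha$ is pointwise positive for every $\alpha\in(-1,n-2]$: this is obvious when $\alpha\geq 0$, while for $\alpha\in(-1,0)$ the inequality $x_1^2\leq|x|^2$ gives $W_\alpha(x)\geq (1+\alpha)/|x|^{n-2}>0$. Combined with the nonnegativity of $|x|^2$, this makes $I_\alpha:\mathcal{P}(\R^n)\to[0,+\infty]$ well defined, with no cancellation issues in the integrals.

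For existence I would apply the direct method. Given a minimising sequence $\{\mu_k\}$, the nonnegativity of $W_\alpha$ yields $\int|x|^2\,d\mu_k\leq I_\alpha(\mu_k)\leq C$, hence tightness and, via Prokhorov, a narrowly convergent subsequence $\mu_k\weakto\mu_\ast$. Lower semicontinuity of the confinement term under narrow convergence follows from Fatou's lemma applied to the nonnegative continuous integrand $|x|^2$; lower semicontinuity of the interaction term follows from $W_\alpha$ being nonnegative and lower semicontinuous on $\R^n\times\R^n$ (with $W_\alpha(0)=+\infty$), together with the narrow convergence $\mu_k\otimes\mu_k\weakto\mu_\ast\otimes\mu_\ast$. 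Hence $\mu_\ast$ is a minimiser of finite energy. To prove compactness of the support, I would derive the first-order Euler--Lagrange identity $2(W_\alpha\ast\mu_\ast)(x)+|x|^2 = c$ on $\supp\mu_\ast$ for some constant $c$, by taking variations along admissible perturbations $\mu_\ast+t(\sigma-\mu_\ast)$ with $\sigma$ compactly supported. Since $W_\alpha\ast\mu_\ast\geq 0$ by positivity of $W_\alpha$, this identity forces $|x|^2\leq c$ on $\supp\mu_\ast$, so $\supp\mu_\ast$ is contained in a closed ball.

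The delicate step is strict convexity, and it is the only place where the upper bound $\alpha\leq n-2$ enters. An explicit Fourier computation, cf.\ \eqref{FT3d}, yields that $\widehat{W}_\alpha$ is a nonnegative function on $\R^n\setminus\{0\}$ which is strictly positive almost everywhere precisely for $\alpha\in(-1,n-2]$. Given two distinct compactly supported probability measures $\mu_0,\mu_1$ with finite interaction energy, the signed measure $\nu:=\mu_0-\mu_1$ has compact support and zero mass, so $\widehat{\nu}$ is smooth with $\widehat{\nu}(0)=0$; this zero at the origin compensates the $|\xi|^{-2}$ singularity of $\widehat{W}_\alpha$ and makes Plancherel--Parseval available. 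I would then conclude
\[
\iint_{\R^n\times\R^n} W_\alpha(x-y)\,d\nu(x)\,d\nu(y)=\int_{\R^n}\widehat{W}_\alpha(\xi)\,|\widehat{\nu}(\xi)|^2\,d\xi>0,
\]
which is the strict convexity of the quadratic interaction functional (the confinement being linear in $\mu$). Combined with the existence, compact support, and finite energy of any minimiser, this yields uniqueness.

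The main obstacle is the rigorous justification of the Plancherel identity above for measures that are merely compactly supported with finite interaction energy, rather than absolutely continuous or smooth. The natural route is a mollification argument $\nu_\varepsilon=\nu\ast\rho_\varepsilon$, for which the identity holds by classical Fourier theory, followed by passage to the limit: one controls the right-hand side via Fatou and the positivity of $\widehat{W}_\alpha$, and the left-hand side via monotone/dominated convergence using the finiteness of $\iint W_\alpha(x-y)\,d\mu_i(x)\,d\mu_i(y)$ for $i=0,1$. Verifying that this limit procedure is lossless in the full range $\alpha\in(-1,n-2]$ is the technical heart of the argument.
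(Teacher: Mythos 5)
Your proposal follows essentially the same route as the paper: pointwise positivity of $W_\alpha$ from the decomposition \eqref{rewrite:W} for well-definedness, the direct method with tightness from the confinement for existence and compact support, and strict convexity via the explicit Fourier transform \eqref{FT3d} (nonnegative, positive a.e.\ exactly on $(-1,n-2]$) combined with a mollification argument to justify Parseval for measures. The only point where your sketch is thinner than the paper is the passage to the limit on the spatial side of the mollified identity: the paper obtains the needed domination from the superharmonicity of $W_0$ (so that $W_\alpha\ast\varphi_\e\le C^2 W_\alpha$ pointwise) and first proves the identity for the nonnegative measures $\nu_1$, $\nu_2$ and $\nu_1+\nu_2$, polarising afterwards, rather than mollifying the signed measure directly.
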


\begin{proof}
The case $n =2$ has been proved in \cite[Section~2]{MRS} and \cite[Proposition 2.1]{CMMRSV}. For $n\geq 3$ the proof follows by a similar argument. For the convenience of the reader we outline
the main steps of the proof.\smallskip

\noindent \textit{Step~1: Well definiteness of $I_\alpha$.} %When $n=2$ the anisotropic term is uniformly bounded and the negative behaviour of $-\log|x|$ at $+\infty$ is balanced by the confinement 
%term; hence the energy $I_\alpha$ is well-defined for every $\alpha\in \R$. 
%For $n\geq 3$, 
Since $\alpha> -1$, if we write $W_\alpha$, for $x\neq 0$, as
\begin{equation}\label{rewrite:W}
W_{\alpha}(x) = 
\frac{1}{|x|^n}\bigg((1+\alpha) x_1^2 + \sum_{i=2}^n x_i^2\bigg), 
\end{equation}
we can immediately see that the energy is well-defined and non-negative on $\mathcal{P}(\mathbb{R}^n)$.\smallskip

\noindent \textit{Step~2: Existence of a compactly supported
minimiser.} 
First of all, it is easy to see that $I_\alpha(\mu_B)<+\infty$, where $\mu_B= \frac{1}{|B_1(0)|}
\chi_{B_1(0)}$. This implies that $\inf_{\mathcal{P}(\mathbb{R}^n)}
I_\alpha < +\infty$. Moreover, we have that
\begin{align}
W_{\alpha}(x-y) + \frac12(|x|^2+|y|^2) 
\geq\frac12(|x|^2+|y|^2). \label{bound:compact}
\end{align}
This lower bound provides tightness and hence
compactness with respect to narrow convergence for minimising
sequences, that, together with the lower semicontinuity of
$I_\alpha$, guarantees the existence of a minimiser.
As in \cite[Section~2.2]{MRS}, one can show that any minimiser of
$I_{\alpha}$ has compact support, again by
\eqref{bound:compact}.\smallskip

\noindent \textit{Step~3: Strict convexity of $I_\alpha$ and
uniqueness of the minimiser.} We prove that
\begin{equation}\label{cvx0}
\int_{\R^n} W_{\alpha}\ast (\nu_1-\nu_2) \,d(\nu_1-\nu_2) > 0
\end{equation}
for every $\nu_1, \nu_2 \in \mathcal{P}(\R^n)$, $\nu_1\neq \nu_2$,
with compact support and finite interaction energy, namely such that
$\int_{\R^n} (W_{\alpha}\ast\nu_i) \, d\nu_i < + \infty$ for
$i=1,2$. Condition \eqref{cvx0} implies strict convexity of
$I_{\alpha}$ on the set of probability measures with compact support
and finite interaction energy and, consequently, uniqueness of the
minimiser.

To prove \eqref{cvx0}, we follow again the same strategy as in \cite[Section~2.3]{MRS}.
The idea consists in rewriting the interaction energy of
$\nu:=\nu_1-\nu_2$ in Fourier space, as
\begin{equation}\label{Fourier:space}
\int_{\R^n} W_{\alpha}\ast \nu \,d \nu = \int_{\R^n}
\widehat{W}_\alpha(\xi) |\widehat\nu(\xi)|^2\, d\xi,
\end{equation}
and proving that $\widehat{W}_\alpha$ is a positive distribution.

\noindent \textit{Step~3.1: Computation of $\widehat{W}_\alpha$.} Note that $W_{\alpha}$ is a tempered distribution, namely
$W_{\alpha}\in{\mathcal S}'$, where $\mathcal S$ denotes the
Schwartz space; hence also $\widehat W_{\alpha}\in{\mathcal S}'$. We recall
that $\widehat W_{\alpha}$ is defined by the formula
$$
\langle \widehat W_{\alpha}, \varphi\rangle := \langle W_{\alpha}, \widehat
\varphi\,\rangle \qquad \text{ for every } \varphi\in{\mathcal S}
$$
where, for $\xi\in\R^n$,
\begin{equation*}
\widehat \varphi(\xi):=\int_{\R^n}\varphi(x)e^{-2\pi i\xi\cdot x}\, dx.
\end{equation*}
To compute $\widehat{W}_\alpha$ it is convenient to rewrite $W_\alpha$ as 
\begin{equation*}
W_\alpha(x) = \Big(1+\frac{\alpha}n\Big) \frac{1}{|x|^{n-2}} + \frac{\alpha}n\,\frac{1}{|x|^n} \bigg((n-1) x_1^2 -\sum_{i=2}^n x_i^2\bigg),
\end{equation*}
namely as the sum of the $n$-dimensional Coulomb potential (up to a multiplicative constant) and the ratio between a homogeneous harmonic polynomial of degree two and a power of $|x|$. 
By \cite[eq.(32), p.73]{Stein} and by \cite[Exercise 1, p.154]{Folland} we have  that the Fourier transform $\widehat W_{\alpha}$ of $W_{\alpha}$ is given by
\begin{align}\label{FT3d}
\langle \widehat W_{\alpha}, \varphi\rangle
 &=  \Big(1+\frac{\alpha}n\Big)\frac{(n-2)\pi^{\frac n2-2}}{2\Gamma(\frac n2)} \int_{\R^n}\frac{1}{|\xi|^2}\varphi(\xi) d\xi - \frac{\alpha}{n} \,\frac{\pi^{\frac n2-2}}{\Gamma(\frac n2)} \int_{\R^n} \frac{(n-1) \xi_1^2 - \sum_{i=2}^n\xi_i^2}{|\xi|^4} \, \varphi(\xi) d\xi \nonumber\\
&= \frac{\pi^{\frac n2-2}}{2\Gamma(\frac n2)}  \int_{\R^n} \frac{(n-2-\alpha) \xi_1^2 + (n-2+\alpha) \sum_{i=2}^n \xi_i^2}{|\xi|^4}   \, \varphi(\xi) d\xi
\end{align}
for every $\varphi\in{\mathcal S}$.
Thus, \eqref{FT3d} implies that $\widehat W_{\alpha}$ is a positive function in $L^1_{\textrm{loc}}(\R^n)$ for every $\alpha\in (-1, n-2]$,
hence in particular a positive tempered distribution.\smallskip

\noindent \textit{Step~3.2: Proof of \eqref{Fourier:space}.} 
%I am using,
%without making it explicit, that if $0 \le \phi_n \rightarrow \phi, \;\; a.e.$ and $\phi_n \le C\phi,$ 
%then $\int \phi_n \rightarrow \int \phi,$ even if $\int \phi =\infty.$
%
We start by proving that \eqref{Fourier:space} holds when $\nu$ is a non-negative finite Borel measure with compact support, 
where we understand that the two sides of the formula are either both finite and coincide, or both equal to $+\infty$. 

We proceed by regularisation. Let $\varphi\in C^\infty_c(B_1(0))$ be non-negative, radial, and with $\int_{\R^n} \varphi (x)\,dx =1.$ For $\e>0$ we define
\begin{equation}\label{convol}
\varphi_\e(x):= \frac{1}{\e^n} \varphi\left(\frac{x}{\e}\right) \qquad  \text{ and} \qquad \nu_\e: = \nu \ast \varphi_\e.
\end{equation}
We claim that
\begin{equation}\label{parep}
\int_{\R^n} (W_\alpha\ast \nu_\e)(x) \nu_\e(x) \,dx = \int_{\R^n} \widehat{W}_\alpha(\xi) |\widehat{\nu_\e}(\xi)|^2 \, d\xi.
\end{equation}
To show this, let us set $f:=W_\alpha \ast \nu_\e$ and $g:=\nu_\e$, and note that $g \in C^{\infty}_c(\R^n)$ and $f \in C^{\infty}(\R^n)$.
Moreover, since $\widehat{\nu_\e}\in{\mathcal S}$ and $\widehat{W}_\alpha\in L^1_{\textrm{loc}}(\R^n)$
behaves as $1/|\xi|^2$ at infinity by \eqref{FT3d}, we have that $\widehat{f}=\widehat{W}_\alpha\,\widehat{\nu_\e}\in L^1(\R^n)$.
Let $\psi \in C^{\infty}_c(\R^n)$ be such that $\psi = 1$ on $B_1(0)$ and 
let $R>0$ be such that the support of $g$ is contained in $B_R(0)$. If $\tau>0$ is such that $\tau R < 1$, then,
by Parseval formula,
\begin{eqnarray}
\int_{\R^n} f(x)g(x)\,dx & = & \int_{\R^n} \psi(\tau x)f(x)g(x) \,dx 
\nonumber
\\
& = & \int_{\R^n} (\widehat{\psi(\tau\, \cdot)} \ast \widehat{f}\, )(\xi) 
\,\overline{\widehat{g}(\xi)}\, d\xi 
= \int_{\R^n} (\widehat{\psi}_\tau \ast \widehat{f}\,)(\xi)\, \overline{\widehat{g}(\xi)}\, d\xi, 
%\xrightarrow{\epsilon \to 0}  
%\int \widehat{f}(\xi)\,\overline{\widehat{g}(\xi)}\, d\xi, 
\label{lime}
\end{eqnarray}
where $\widehat{\psi}_\tau(x):= \tau^{-n} \widehat{\psi}(x/\tau)$. Using that $\widehat\psi\in\mathcal S\subset L^1(\R^n)$, $\int_{\R^n}\widehat\psi(\xi)\,d\xi=\psi(0)=1$, and that $\widehat f\in L^1(\R^n)$, it is easy to see that $\widehat{\psi}_\tau \ast \widehat{f}$ converges to $\widehat f$
in $L^1(\R^n)$, as $\tau\to0$. 
In fact,
$$
\|\widehat{\psi}_\tau \ast \widehat{f} -\widehat{f} \|_{L^1}\leq \int_{\R^n}|\widehat\psi(\eta)|\int_{\R^n} |\widehat{f}(\xi-\tau\eta)-\widehat{f}(\xi)|\,d\xi d\eta
$$
and one can conclude by the continuity of translations in the $L^1$-norm and by the Dominated Convergence Theorem.
Since $\widehat{g}\in L^{\infty}(\R^n)$, we deduce that
$$
\lim_{\tau\to0} \int_{\R^n} (\widehat{\psi}_\tau \ast \widehat{f}\,)(\xi)\, \overline{\widehat{g}(\xi)}\, d\xi
= \int_{\R^n} \widehat{f}(\xi)\, \overline{\widehat{g}(\xi)}\, d\xi,
$$ 
which, together with \eqref{lime}, proves \eqref{parep}.

We now let $\e \rightarrow 0$ in \eqref{parep}. For the right-hand side we observe that
for every~$\xi\in\R^n$
$$
\widehat{\varphi_\e}(\xi) = \widehat{\varphi}(\e \xi) \to \widehat{\varphi}(0) =1,
$$
as $\e\to0$, and that
$\|\widehat{\varphi_\e}\|_{L^\infty}\le \|\varphi \|_{L^1} =1$
for every $\e >0$.
Therefore, either by the Dominated Convergence Theorem or the Fatou Lemma, we have
\begin{equation}\label{trickJV}
\int_{\R^n}  \widehat{W}_\alpha(\xi) |\widehat{\nu_\e}(\xi)|^2 \, d\xi = \int_{\R^n}  \widehat{W}_\alpha(\xi) |\widehat{\nu}(\xi)|^2 
|\widehat{\varphi_\e}(\xi) |^2 \, d\xi \ \to \ \int_{\R^n}  \widehat{W}_\alpha(\xi) |\widehat{\nu}(\xi)|^2  \, d\xi,
\end{equation}
as $\e\to0$, even if the right-hand side is infinite.

To deal with the left-hand side of \eqref{parep} we note that, since $\alpha>-1$, 
there exists a positive constant $C=C(\alpha)$ such that
\begin{equation}\label{dobliu}
 \frac{1}{C}\, W_0(x) \le W_\alpha(x) \le C\, W_0(x) \qquad \text{ for every } x\in\R^n.
\end{equation}
Hence,
\begin{equation*}
(W_\alpha\ast \varphi_\e)(z) \le C(W_0\ast \varphi_\e)(z) \qquad \text{ for every } z\in\R^n.
\end{equation*}
Since $W_0$ is superharmonic and $\varphi$ is radial with integral $1$,
the mean value property on spheres yields
\begin{equation*}
(W_0\ast \varphi_\e)(z) 
\le W_0(z) \qquad \text{ for every } z\in\R^n.
\end{equation*}
Thus, combining the three previous inequalities,
\begin{equation}\label{dobliufun}
(W_\alpha\ast \varphi_\e)(z) \le C\, W_0(z) \le C^2\,  W_\alpha(z) \qquad \text{ for every } z\in\R^n.
\end{equation}
Note that for every $z \in \R^n$
\begin{equation}\label{kereg}
(W_\alpha\ast \varphi_\e)(z) \to W_\alpha(z), 
\end{equation}
as $\e\to0$, since $W_\alpha$ is continuous as a function with values into $[0,+\infty]$. 
Owing to the convergence \eqref{kereg} and the domination \eqref{dobliufun},
we can either apply the Dominated Convergence Theorem or the Fatou Lemma to deduce that
\begin{equation}\label{conv}
\iint_{\mathbb{R}^n\times \mathbb{R}^n}(W_\alpha\ast \varphi_\e)(x-y) \, d\nu(x)\,d\nu(y) \ \to \ 
\iint_{\mathbb{R}^n\times \mathbb{R}^n} W_\alpha(x-y)  \, d\nu(x)\, d\nu(y),
\end{equation}
as $\e\to0$, even if the right-hand side is infinite.

We now go back to the left-hand side of \eqref{parep} and observe that
\begin{equation*}
\int_{\R^n} (W_\alpha\ast \nu_\e)(x)\nu_\e(x) \, dx = \iint_{\mathbb{R}^n\times \mathbb{R}^n}(W_\alpha\ast \varphi_\e\ast \varphi_\e)(x-y) \, d\nu(x)\,d\nu(y).
\end{equation*}
Note that $(\varphi_\e \ast \varphi_\e)(x) = \e^{-n}(\varphi \ast \varphi)(x/\e)$ and that $\varphi \ast \varphi$ inherits the properties of $\varphi$: it is radial, belongs to $C^\infty_c(\R^n)$, and $\int_{\R^n} (\varphi \ast \varphi)(x)\,dx =1$. Therefore, \eqref{conv} holds with $\varphi_\e$ replaced by $\varphi_\e \ast \varphi_\e$. This concludes the proof of \eqref{Fourier:space} for a non-negative measure $\nu$.

\smallskip

We now prove \eqref{Fourier:space} for a signed and neutral measure $\nu:=\nu_1-\nu_2$, where $\nu_1, \nu_2 \in \mathcal{P}(\R^n)$ have compact support and finite interaction energy. First of all, by using \eqref{Fourier:space} for $\nu_1+\nu_2$ we have that
\begin{eqnarray*}
\int_{\R^n}  (W_\alpha \ast (\nu_1+\nu_2))(x)\,d(\nu_1+\nu_2)(x) & = & \int_{\R^n}  \widehat{W}_\alpha(\xi) |\widehat{\nu}_1(\xi)+\widehat{\nu}_2(\xi)|^2\,d\xi 
\\
& \le & 2 \int_{\R^n}  \widehat{W}_\alpha(\xi)(|\widehat{\nu}_1(\xi)|^2 +|\widehat{\nu}_2(\xi)|^2 )\, d\xi <  +\infty.
\end{eqnarray*}
By expanding both sides of the identity above and using \eqref{Fourier:space} for $\nu_1$ and $\nu_2$ we get
$$
\int_{\R^n}  (W_\alpha\ast \nu_1)(x)\,d\nu_2(x)= \int_{\R^n}  \widehat{W}_\alpha(\xi)\,{\rm Re}\big(\widehat{\nu}_1(\xi)\overline{\widehat{\nu}_2(\xi)}\,\big) \,d\xi,
$$
which, by using again \eqref{Fourier:space} for $\nu_1$ and $\nu_2$, gives
$$
\int_{\R^n}  (W_\alpha\ast (\nu_1-\nu_2))(x)\,d(\nu_1-\nu_2)(x) = \int_{\R^n} \widehat{W}_\alpha(\xi)|\widehat{\nu}_1(\xi)-\widehat{\nu}_2(\xi)|^2\,d\xi.
$$
Since the right-hand side is strictly positive for $\nu_1\neq\nu_2$, equation \eqref{cvx0} is proved.
\end{proof}

\begin{remark}\label{alfaneg} 
%[Well definiteness of $I_\alpha$ for $\alpha\leq-1$]
In the case $n\geq3$, unlike in the two-dimensional case, the energy is not well-defined for $\alpha<-1$. Indeed, writing $W_\alpha$ as in \eqref{rewrite:W}, we can see that the two terms in the right-hand side of \eqref{rewrite:W} are both unbounded for $|x|$ close to zero, and have opposite sign. 

Even for $\alpha=-1$ the situation is subtle. The functional $I_{-1}$ with kernel $W_{-1}$ defined as in \eqref{V:int0} is not lower semicontinuous with respect to narrow convergence: Indeed, the probability measures $\mu_k:=k\HH^1\mres \big((0,\frac1k)\times\{0\}^{n-1}\big)$ converge narrowly to the Dirac delta at $0$, but 
\begin{equation}\label{not-ls-mg}
0=\lim_{k\to +\infty} I_{-1}(\mu_k) < I_{-1}(\delta_0) = +\infty.
\end{equation}
So, in particular, $I_{-1}$ is not the $\Gamma$-limit of $I_\alpha$ for $\alpha\to -1^{+}$. Moreover, \eqref{not-ls-mg} implies that the relaxed functional $\overline{I_{-1}}$
of $I_{-1}$ is equal to $0$ at $\delta_0$, which is therefore a minimiser of $\overline{I_{-1}}$.
Note that the kernel $W_{-1}$ is also not lower semicontinuous, since $W_{-1}(0)=+\infty$ and 
$$
\liminf_{x\to0} W_{-1}(x)=0.
$$
If we define a new kernel $\widetilde W_{-1}$ to be as $W_{-1}$ for $x\neq 0$, and $\widetilde W_{-1}(0):=0$, then $\widetilde W_{-1}$ is lower semicontinuous, and the corresponding functional $\widetilde{I}_{-1}$ has a  unique minimiser, which is simply the Dirac delta at $0$. 

The case $\alpha=-1$ will be discussed in detail in Section \ref{Section-1}.
\end{remark}

\end{section}

%%%%%%%%%%%%%%%%%%%%%%%%%%%%%%%%%

\begin{section}{Minimality of spheroids}\label{proof-section}

It is a standard computation in potential theory to show that any minimiser $\mu$ of $I_\alpha$ must satisfy
the following Euler-Lagrange conditions: There exists $C\in\R$ such
that
\begin{align}\label{EL-1}
&(W_{\alpha}\ast \mu)(x) + \frac{|x|^2}2 = C \quad  \text{for
$\mu$-a.e.\ } x\in \supp\mu,
\\ \label{EL-2}
&(W_{\alpha}\ast \mu)(x) + \frac{|x|^2}2 \geq C \quad \text{for
q.e.\ }x\in \R^n,
\end{align}
where {\em quasi everywhere} (q.e.) means up to sets of zero
capacity (see \cite[Chapter~I, Theorem~1.3]{SaTo} or \cite[Section~3.1]{MRS}).  The Euler-Lagrange conditions
\eqref{EL-1}--\eqref{EL-2} are in fact equivalent to
minimality for $\alpha\in (-1,n-2]$ due to
Proposition~\ref{exist+uniq}. We refer to \cite[Section 3.1]{MRS}  for
details.

Let $a,b>0$, and let $\Omega(a,b)\subset \R^n$ denote the ellipsoid with semi-axis $a$ in the $x_1$ direction and the other semi-axes of the same length $b$, namely
\begin{equation*}%\label{gen:ellip0}
\Omega(a,b):= \left\{x=(x_1,\dots,x_n)\in \R^n: \ \frac{x_1^2}{a^2} + \frac{1}{b^2}\sum_{i=2}^nx_i^2 \leq 1\right\}.
\end{equation*}
This special ellipsoid is called \textit{oblate spheroid} if $a<b$ and \textit{prolate spheroid} if $a>b$.

The main result is the following. 

\begin{theorem}\label{characterisation}
Let $n\geq 3$ and $\alpha\in(-1,n-2]$. There exist $a(\alpha), b(\alpha)>0$ such that the measure 
$$
\mu_\alpha:= \frac{1}{| \Omega_\alpha|} \, \chi_{ \Omega_\alpha}, \qquad \Omega_\alpha:=  \Omega(a(\alpha), b(\alpha)),
$$
is the unique minimiser of the functional $I_\alpha$ in $\mathcal P(\R^n)$ and satisfies the Euler-Lagrange conditions
\begin{align}\label{EL-1-real}
(W_{\alpha}\ast \mu_\alpha)(x) + \frac{|x|^2}2 &= C_\alpha \quad  
\text{for every\ } x\in \Omega_\alpha,\\\label{EL-2-real}
(W_{\alpha}\ast \mu_\alpha)(x) + \frac{|x|^2}2 &\geq C_\alpha \quad \text{for every\ }x\in \R^n,
\end{align}
with $C_\alpha= 2I_\alpha(\mu_\alpha)- \frac12\int_{\mathbb{R}^n} |x|^2 \,d\mu_\alpha(x)$. Moreover, the spheroid $\Omega_\alpha$ is prolate for $\alpha\in (-1,0)$ and oblate for $\alpha\in (0,n-2]$.
\end{theorem}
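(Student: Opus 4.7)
The plan is to produce a spheroid $\Omega(a,b)$ whose normalized characteristic function $\mu_{a,b}$ verifies the Euler--Lagrange conditions \eqref{EL-1-real}--\eqref{EL-2-real}. Since, by Proposition~\ref{exist+uniq}, these conditions are necessary and sufficient for minimality in the range $\alpha\in(-1,n-2]$ and the minimizer is unique, the whole theorem will follow: the shape dichotomy (prolate vs.\ oblate) emerges directly from the relative size of $a(\alpha)$ and $b(\alpha)$ forced by \eqref{EL-1-real}, while the value $C_\alpha=2I_\alpha(\mu_\alpha)-\tfrac12\int|x|^2\,d\mu_\alpha$ is obtained simply by integrating \eqref{EL-1-real} against $\mu_\alpha$ and using the definition of $I_\alpha$.

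The heart of the argument is a closed-form computation of the potential $\Phi_\alpha:=W_\alpha\ast\mu_{a,b}$. Splitting $W_\alpha=W_0+\alpha\,x_1^2/|x|^n$ and writing $\Phi_\alpha=\Phi_0+\alpha\Psi$, the Newtonian part $\Phi_0$ is classical (see, e.g., \cite{Difratta, Kellog}): inside $\Omega(a,b)$ it is a quadratic polynomial in $x$ with coefficients depending on $a^2,b^2$, and outside it has an integral representation along the family of spheroids confocal with $\Omega(a,b)$, a family parametrised by a scalar $\lambda=\lambda(x)$. For $\Psi$ the plan is to exploit two distinct identifications with $\Phi_0$: inside $\Omega(a,b)$, the correction $\Psi$ is recovered from $\Phi_0$ by differentiation with respect to the aspect-ratio parameter $a^2/b^2$, reflecting the fact that $x_1^2/|x|^n$ is, up to a constant, the derivative of $1/|x|^{n-2}$ along an anisotropic dilation; outside $\Omega(a,b)$, the invariance of $\Phi_0$ along confocal spheroids yields $\partial_\lambda\Phi_0\equiv 0$ along the confocal family at a fixed point $x$, and this, together with the chain rule, allows one to express $\Psi$ as a derivative of $\Phi_0$ in a transverse direction to that family (this is the content of \eqref{derivative=0-n} referred to in the introduction). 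This differentiation strategy avoids direct singular integration and gives explicit formulas for $\Phi_\alpha$ in both regions.

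With $\Phi_\alpha$ in hand, inside $\Omega(a,b)$ the function $\Phi_\alpha(x)+|x|^2/2$ is again a quadratic polynomial in $x_1,\dots,x_n$, whose coefficients depend explicitly on $a,b,\alpha$. Requiring it to be constant, as \eqref{EL-1-real} demands, reduces to a $2\times 2$ system of algebraic equations matching the coefficients of $x_1^2$ and of $x_i^2$ ($i\geq2$) to $-1/2$. I would then show that this system admits a solution $(a(\alpha),b(\alpha))$ with $a(\alpha),b(\alpha)>0$ throughout $(-1,n-2]$, with $a(\alpha)>b(\alpha)$ for $\alpha<0$ and $a(\alpha)<b(\alpha)$ for $\alpha>0$; the monotone dependence of the coefficients on the aspect ratio $a/b$ should yield existence by an intermediate-value argument, and uniqueness of the stationary spheroid ultimately follows from strict convexity. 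The endpoint $\alpha=n-2$ should be handled by continuity in $\alpha$, checking carefully that no degeneration $a\to 0$ or $b\to0$ occurs, in contrast with the two-dimensional situation where $\alpha=\pm1$ produces such a collapse.

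The remaining step, and the one I expect to be the main obstacle, is the exterior inequality \eqref{EL-2-real}. Writing $h(x):=\Phi_\alpha(x)+|x|^2/2-C_\alpha$, \eqref{EL-1-real} gives $h\equiv 0$ on $\Omega_\alpha$, while $h(x)\to+\infty$ as $|x|\to\infty$ by the quadratic confinement; the delicate point is to exclude negative values in $\R^n\setminus\Omega_\alpha$. The plan is to use the exterior expression of $\Phi_\alpha$ provided by the confocal-parameter differentiation of $\Phi_0$, reduce $h$ along each confocal spheroid to a one-variable monotonicity statement in the confocal parameter $\lambda$, and conclude that $h\geq 0$ outside by matching values on $\partial\Omega_\alpha$. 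A careful treatment will be required at $\alpha=n-2$, where the coefficient in front of the anisotropic term is largest and the relevant one-dimensional monotonicity is tightest; once this is established, uniqueness of the minimizer follows from the strict convexity already proved in Proposition~\ref{exist+uniq}, which in addition forces uniqueness of the stationary spheroid found in the previous step.
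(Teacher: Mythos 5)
Your proposal follows essentially the same route as the paper: splitting $\Phi_\alpha=\Phi_0+\alpha\Psi$, recovering $\Psi$ inside by differentiating $\Phi_0$ in the aspect ratio and outside via the confocal-invariance identity, reducing \eqref{EL-1-real} to matching the $x_1^2$ and $r^2$ coefficients with an intermediate-value argument in $t=a^2/b^2$, proving \eqref{EL-2-real} by a one-variable monotonicity along the spheroidal (confocal) coordinate, and invoking strict convexity for uniqueness. The only caveat is that yours is a plan rather than a worked proof, and the endpoint $\alpha=n-2$ is treated in the paper not by continuity but by a direct sign computation of the limit of the coefficient function as $t\to 0^+$; otherwise the strategies coincide.
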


\begin{remark}
For $\alpha=0$, $n\geq3$, it is well-known that the unique minimiser of  the Coulomb energy $I_0$ is the normalised characteristic function of the ball centred at $0$ with radius $(n-2)^{1/n}$ (see, e.g., \cite[Corollary~1.3]{CGZ}). In other words, $a(0)=b(0)=(n-2)^{1/n}$. In the proof below we will focus only on the case $\alpha\neq0$.
\end{remark}

We split the proof of Theorem \ref{characterisation} into Section~\ref{sect:EL1}, where we prove the stationarity condition \eqref{EL-1-real}, and Section~\ref{sect:EL2-n}, where we prove \eqref{EL-2-real}. The heart of the proof consists in the exact evaluation of the convolution 
$$
\Phi_\alpha:= W_\alpha\ast \frac{1}{|\Omega(a,b)|}\chi_{\Omega(a,b)}
$$
both in $\Omega(a,b)$ (in Section~\ref{sect:EL1}) and in $\R^n\setminus \Omega(a,b)$ (in Section~\ref{sect:EL2-n}), for $a,b>0$.

We write 
\begin{equation}\label{defPsi}
\Phi_\alpha(x)= \Phi_0(x) + \alpha \Psi(x), \quad \textrm{with } \, \Psi(x):= \avint_{\!\!\Omega(a,b)}\frac{(x_1-y_1)^2}{|x-y|^n} \,dy. 
\end{equation}

\subsection{The condition \eqref{EL-1-real} on spheroids}\label{sect:EL1} Let $\alpha\in (-1,n-2]$. We claim that there exist $a(\alpha), b(\alpha)>0$ (with $a(\alpha)< b(\alpha)$ for $\alpha\in (0,n-2]$ and  $a(\alpha)> b(\alpha)$ for $\alpha\in (-1,0)$) such that
\begin{align}
(W_{\alpha}\ast \mu_\alpha)(x) + \frac{|x|^2}2 = C_\alpha \quad  
\text{for every\ } x\in \Omega_\alpha, \qquad \mu_\alpha:= \frac{1}{| \Omega_\alpha|} \, \chi_{ \Omega_\alpha},
\end{align}
where we recall that $\Omega_\alpha=  \Omega(a(\alpha), b(\alpha))$. 

In the two-dimensional case studied in \cite{CMMRSV}, we computed the semi-axes of $\Omega_\alpha$ in terms of $\alpha$, and deduced the explicit values $a=\sqrt{1-\alpha}$ and $b=\sqrt{1+\alpha}$. For $n\geq  3$ we do not have explicit expressions for the semi-axes in terms of $\alpha$.

\subsubsection{The potential inside a spheroid}\label{sect:inside} 
In this section, for $\alpha\in (-1,n-2]$ and $a,b>0$, we evaluate $\Phi_\alpha(x)$ with $x\in \Omega(a,b)$. We start by recalling the case $\alpha=0$ of the Coulomb potential, namely 
$$
\Phi_0(x)= \avint_{\!\!\Omega(a,b)}\frac{1}{|x-y|^{n-2}} \,dy.
$$
For $x\in \Omega(a,b)$ we have that 
\begin{align}\label{Riesz:ellipsoid-n}
\Phi_0(x)=& 
 \frac{n(n-2)}4\int_0^\infty \left(1-\frac{x_1^2}{a^2+s} - \frac{r^2}{b^2+s}\right) \frac{ds}{\sqrt{a^2+s}(b^2+s)^{\frac{n-1}2}}\nonumber\\
=& - \frac{n(n-2)}{4b^n} \left(x_1^2 \int_0^\infty \frac{d\sigma}{(t+\sigma)^{3/2}(1+\sigma)^{\frac{n-1}2}} 
+ r^2 \int_0^\infty\frac{d\sigma}{\sqrt{t+\sigma}(1+\sigma)^{\frac{n+1}2}}\right) + C(a^2,b^2),
\end{align}
where $r^2=\sum_{i=2}^n x_i^2$, $C(a^2,b^2)$ is a constant that depends smoothly on $a^2$ and $b^2$, and in the last step we set $\sigma:=s/b^2$ and denoted with $t$ the aspect ratio $t:=a^2/b^2$, $t>0$ (see, e.g., \cite{Difratta}). In particular, $\Phi_0$ in $\Omega(a,b)$ is a second-degree polynomial with no linear terms. 

We now obtain the anisotropic term $\Psi$ of $\Phi_\alpha$ on $\Omega(a,b)$ (see \eqref{defPsi}) by differentiating $\Phi_0$ with respect to the aspect ratio $t$, in the spirit of \cite[Section 4]{CMMRSV}. First of all note that, by the definition of $\Phi_0$ and 
by a change of variables,
\begin{align}\label{Riesz:ellipsoid_nv2-n}
\Phi_0(b\sqrt t u_1,bu') = \frac1{b^{n-2}}\avint_{\!\!B_1(0)} \frac{dv}{\big(t(u_1-v_1)^2+|u'-v'|^2\big)^{\frac{n-2}2}},
\end{align}
where $u':=(u_2,\dots,u_n)$, and $u=(u_1,u')\in B_1(0)$. By differentiating \eqref{Riesz:ellipsoid_nv2-n} with respect to the aspect ratio $t$ we obtain
\begin{align}\label{diff:Phi0-n}
\frac{\partial}{\partial t} \Big(\Phi_0(b\sqrt t u_1,bu')\Big) = -\frac{1}{b^{n-2} }\frac{n-2}2 \avint_{\!\!B_1(0)} \frac{ (u_1-v_1)^2}{\big(t(u_1-v_1)^2+|u'-v'|^2\big)^{n/2}}\,dv ,
\end{align}
and since 
$$
\Psi(b\sqrt t u_1,bu') = \frac1{b^{n-2}} \avint_{\!\!B_1(0)} \frac{t (u_1-v_1)^2}{\big(t(u_1-v_1)^2+|u'-v'|^2\big)^{n/2}}\, dv,
$$
it follows by \eqref{diff:Phi0-n} that 
\begin{equation}\label{diff:anis-n}
\Psi(b\sqrt t u_1,bu') =-\frac{2t}{n-2} \frac{\partial}{\partial t} \Big(\Phi_0(b\sqrt t u_1,bu')\Big).
\end{equation}
On the other hand, by the explicit expression \eqref{Riesz:ellipsoid-n} we have that for every $u=(u_1,u')\in B_1(0)$ 
\begin{align*}%\label{Riesz:ellipsoid_nv}
\Phi_0(b\sqrt t u_1,bu') = &- u_1^2 \, \frac{n(n-2) t}{4 b^{n-2}} \int_0^\infty \frac{d\sigma}{(t+\sigma)^{3/2}(1+\sigma)^{\frac{n-1}2}}\\
& - |u'|^2 \, \frac{n(n-2)}{4 b^{n-2}} \int_0^\infty\frac{d\sigma}{\sqrt{t+\sigma}(1+\sigma)^{\frac{n+1}2}} + C(tb^2,b^2),%\nonumber
\end{align*}
and hence 
\begin{align}\label{eq:RHS-n}
&\frac{\partial}{\partial t} \Big(\Phi_0(b\sqrt t u_1,bu') \Big)\nonumber\\
&=  - u_1^2 \, \frac{n(n-2)}{4 b^{n-2}} \left(\int_0^\infty \frac{d\sigma}{(t+\sigma)^{3/2}(1+\sigma)^{\frac{n-1}2}}
- \frac{3 t}2 \int_0^\infty \frac{d\sigma}{(t+\sigma)^{5/2}(1+\sigma)^{\frac{n-1}2}}\right)\nonumber\\
& + |u'|^2\,  \frac{n(n-2)}{8 b^{n-2}} \int_0^\infty\frac{d\sigma}{(t+\sigma)^{3/2}(1+\sigma)^{\frac{n+1}2}} +\tilde C(tb^2,b^2),
\end{align}
where $\tilde C(tb^2,b^2)$ is another constant. So, by \eqref{diff:anis-n} and \eqref{eq:RHS-n} we obtain the expression of $\Psi$ for $x\in\Omega(a,b)$, namely

\begin{align*}
\Psi(x) = &\, x_1^2 \, \frac{n}{2b^n} \left( \int_0^\infty \frac{d\sigma}{(t+\sigma)^{3/2}(1+\sigma)^{\frac{n-1}2}}
- \frac{3t}2 \int_0^\infty \frac{d\sigma}{(t+\sigma)^{5/2}(1+\sigma)^{\frac{n-1}2}}\right) \nonumber\\
& -  r^2 \, \frac{nt}{4 b^n} \int_0^\infty\frac{d\sigma}{(t+\sigma)^{3/2}(1+\sigma)^{\frac{n+1}2}}  -\frac{2t}{n-2} \tilde C(tb^2,b^2).
\end{align*}
In conclusion, for $x\in \Omega(a,b)$,
\begin{align}\label{potential:inside-n}
\Phi_\alpha(x)= &\,  x_1^2 \frac{n}{4 b^n} \bigg((2\alpha-(n-2))\int_0^\infty\frac{d\sigma}{(t+\sigma)^{3/2}(1+\sigma)^{\frac{n-1}2}} - 3\alpha t\int_0^\infty\frac{d\sigma}{(t+\sigma)^{5/2}(1+\sigma)^{\frac{n-1}2}}\bigg)\nonumber\\
 +& r^2 \frac{n}{4 b^n} \bigg(-(n-2)\int_0^\infty\frac{d\sigma}{\sqrt{t+\sigma} (1+\sigma)^{\frac{n+1}2}} - \alpha t\int_0^\infty\frac{d\sigma}{(t+\sigma)^{3/2}(1+\sigma)^{\frac{n+1}2}}\bigg) + C,
\end{align}
where $C$ denotes a constant and $t>0$.

\subsubsection{The condition \eqref{EL-1-real} on spheroids}\label{EL-1-real-spheroids-n}
We now use the expression \eqref{potential:inside-n} of the potential on spheroids to verify that there is a spheroid for which the first Euler-Lagrange condition  \eqref{EL-1-real}  is satisfied.

We start by establishing some relations among the integrals appearing in the expressions of the coefficients of $x_1^2$ and $r^2$. Note that, by defining
\begin{equation}\label{def:H}
H(t):=\int_0^\infty\frac{d\sigma}{(t+\sigma)^{3/2}(1+\sigma)^{\frac{n-1}2}}
\end{equation}
for $t>0$, we have that
\begin{align}
\int_0^\infty\frac{d\sigma}{(t+\sigma)^{5/2}(1+\sigma)^{\frac{n-1}2}} &=-\frac23 H'(t),\label{def:Hp}\\
\int_0^\infty\frac{d\sigma}{\sqrt{t+\sigma}(1+\sigma)^{\frac{n+1}2}} &=\frac{2}{n-1}\frac1{\sqrt{t}} - \frac1{n-1} H(t),\label{useful-b}\\
\int_0^\infty\frac{d\sigma}{(t+\sigma)^{3/2}(1+\sigma)^{\frac{n+1}2}} &=\frac{2}{n-1}\frac{1}{t^{3/2}} + \frac{2}{n-1}H'(t)\label{useful-c}.
\end{align}
Note also that 
\begin{equation}\label{HprimeH}
-n H(t)+2(1-t)H'(t) + \frac{2}{t^{3/2}}=0.
\end{equation}
To prove it, we rewrite \eqref{useful-c} as
\begin{align*}\label{H-rewrite-useful}
\frac{2}{n-1}H'(t) &= \frac1t \int_0^\infty\frac{t+\sigma -\sigma}{(t+\sigma)^{3/2}(1+\sigma)^{\frac{n+1}2}}\,d\sigma 
- \frac{2}{n-1}\frac{1}{t^{3/2}} 
\\
&=  \frac1t \left( \frac{2}{n-1}\frac1{\sqrt{t}} - \frac1{n-1} H(t) \right)
- \frac1t \int_0^\infty\frac{1+ \sigma -1}{(t+\sigma)^{3/2}(1+\sigma)^{\frac{n+1}2}}\,d\sigma 
- \frac{2}{n-1}\frac{1}{t^{3/2}} 
\\
&=  - \frac1{n-1} \frac1t H(t) 
- \frac1t H(t) +\frac1t \left( \frac{2}{n-1}\frac{1}{t^{3/2}} + \frac{2}{n-1}H'(t) \right),
\end{align*}
hence obtaining the relation \eqref{HprimeH}.

Integrating \eqref{HprimeH} we deduce that 
$$
H(t)=\frac1{|1-t|^{\frac{n}2}}\int_t^1 \frac{|1-s|^{\frac{n}2}}{s^{3/2}(1-s)}\, ds
\qquad \text{ for } t\neq1.
$$
By integration by parts we obtain
\begin{equation}\label{newH}
H(t)= \frac{2}{\sqrt t(1-t)}- \frac{n-2}{|1-t|^{n/2}}\int_t^1  \frac{|1-s|^{\frac{n}2-2}}{\sqrt s}\, ds \qquad \text{ for } t\neq1,
\end{equation}
and by differentiation
\begin{equation}\label{newH'}
H'(t)= \frac{(n+1)t-1}{t^{3/2}(1-t)^2}- \frac{n(n-2)}{2}\frac{1-t}{|1-t|^{\frac{n}2+2}}\int_t^1  \frac{|1-s|^{\frac{n}2-2}}{\sqrt s}\, ds \qquad \text{ for } t\neq1.
\end{equation}

Condition \eqref{EL-1-real} on spheroids is equivalent to the following two equations, obtained by equating the coefficients of $x_1^2$:
 \begin{equation}\label{r-EL1-1-n}
 -\frac12 = \frac{n}{4b^n}\left((2\alpha-(n-2))H(t)+2\alpha t H'(t)\right),
 \end{equation}
 and of $r^2$:
 \begin{equation}\label{r-EL1-2-n}
  -\frac12 = \frac{n}{4(n-1)b^n}\left(-2(n-2+\alpha)\frac1{\sqrt t} +(n-2) H(t)-2\alpha t H'(t)\right).
 \end{equation}
 
For what follows it is more convenient to reduce to two alternative equivalent conditions: the first one is obtained by adding \eqref{r-EL1-1-n} to $(n-1)$-times \eqref{r-EL1-2-n}:
\begin{equation}\label{r-EL1-4-n}
b^n = \frac{n-2+\alpha}{\sqrt t} - \alpha H(t),
\end{equation}
and the second condition is obtained by subtracting \eqref{r-EL1-1-n} from \eqref{r-EL1-2-n}:
\begin{align}\label{r-EL1-3-n}
\alpha \left((n-1)H(t)+nt H'(t)+\frac{1}{\sqrt t}\right) + (n-2)\left(-\frac{n}2 H(t)+\frac{1}{\sqrt t}\right)=0.
\end{align}
We claim that, for every $\alpha\in (-1,n-2]$ there exists a spheroid satisfying \eqref{r-EL1-4-n} and \eqref{r-EL1-3-n}, and hence satisfying the stationarity condition \eqref{EL-1-real}.

\subsubsection{The equation \eqref{r-EL1-3-n}}\label{sec313} We denote with $F(t,\alpha)$, for $\alpha\in \R$ and $t>0$ (we recall that $t=a^2/b^2$), the left-hand side of \eqref{r-EL1-3-n}. Then \eqref{r-EL1-3-n} is of the form $F(t,\alpha)=0$. We write
\begin{equation*}%\label{r-FAB-n}
F(t,\alpha) = \frac{1}{\sqrt t}\big(A(t)\alpha+B(t)\big), \quad (t,\alpha)\in (0,+\infty)\times \R,
\end{equation*}
where
\begin{align}\label{r-AandB-n}
 A(t):=  (n-1)\sqrt t H(t)+nt^{3/2} H'(t)+1, \qquad B(t):= -\frac{n(n-2)}2 \sqrt t H(t) + (n-2).
 \end{align}

We claim that for every $\alpha\in (-1,n-2]$ there exists $t=t(\alpha)>0$ such that $F(t(\alpha),\alpha)=0$. 
We start by analysing the behaviour of $A$ and $B$ for $t$ close to zero. By \eqref{newH} we have that 
\begin{equation}\label{newH2}
\sqrt t H(t) =  \frac{2}{1-t}- \frac{(n-2)\sqrt t}{(1-t)^{n/2}}\int_t^1  \frac{(1-s)^{\frac{n}2-2}}{\sqrt s}\, ds \qquad \text{ for } 0<t<1,
\end{equation}
and since 
\begin{equation}\label{integ-finite}
\int_0^1  \frac{(1-s)^{\frac{n}2-2}}{\sqrt s}\, ds<+\infty,
\end{equation}
we immediately deduce that $\lim_{t\to 0^+} \sqrt t H(t) = 2$. Moreover, by \eqref{newH'}, 
\begin{equation}\label{newH'2}
t^{3/2} H'(t) =  \frac{(n+1)t-1}{(1-t)^2}- \frac{n(n-2)t^{3/2}}{2(1-t)^{\frac{n}2+1}}\int_t^1  \frac{(1-s)^{\frac{n}2-2}}{\sqrt s}\, ds \qquad \text{ for } 0<t<1, 
\end{equation}
thus, by \eqref{integ-finite} we deduce that $\lim_{t\to 0^+} t^{3/2} H'(t) = -1$. This implies that
\begin{align*}
\lim_{t\to 0^+} A(t) = \lim_{t\to 0^+} \left((n-1)\sqrt t H(t)+ n t^{3/2} H'(t) + 1\right) = 2(n-1) - n +1 =n-1,
\end{align*}
and 
$$
\lim_{t\to 0^+} B(t)  = (n-2)\lim_{t\to 0^+} \left(-\frac{n}2 \sqrt t H(t) + 1\right) = -(n-1)(n-2).
$$
Hence, if $\alpha\neq n-2$, $\lim_{t \to 0^+} F(t,\alpha) = -\infty$. 
If $\alpha=n-2$, by \eqref{r-AandB-n} we have 
$$
\sqrt tF(t,n-2)= \frac{(n-2)^2}2 (\sqrt t H(t)-2) + n(n-2)(t^{3/2}H'(t)+1).
$$
Using \eqref{newH2} and \eqref{newH'2}, we obtain
$$
F(t,n-2)=\frac{(n-2)\sqrt t}{(1-t)^2}(2t+n^2-2) -
\frac{(n-2)^2}{2(1-t)^{\frac{n}2+1}}\left( (n-2)(1-t)+n^2t\right)
\int_t^1  \frac{(1-s)^{\frac{n}2-2}}{\sqrt s}\, ds
$$
for $0<t<1$.
By \eqref{integ-finite} we deduce that 
$$
\lim_{t \to 0^+} F(t, n-2)= -\frac{(n-2)^3}{2}\int_0^1  \frac{(1-s)^{\frac{n}2-2}}{\sqrt s}\, ds< 0.
$$

By a direct computation from \eqref{def:H} and \eqref{def:Hp} we have that $H(1) = \frac2n$ and $H'(1)=-\frac3{(n+2)}$, therefore  
$F(1,\alpha) = \frac{4(n-1)}{n(n+2)}\alpha$. Finally, one can check directly that 
\begin{equation}\label{limABinf}
\lim_{t\to +\infty}A(t)=1 \qquad \text{ and } \qquad \lim_{t\to +\infty}B(t)=n-2.
\end{equation}

Now fix $\bar \alpha\in (0,n-2]$: then, since $\lim_{t \to 0^+} F(t,\bar\alpha)< 0$ and $F(1,\bar\alpha)>0$, and $F(\cdot,\bar\alpha)$ is continuous
on $(0,\infty)$, it follows that there exists at least one $t(\bar\alpha)\in (0,1)$ such that $F(t(\bar\alpha),\bar\alpha)=0$. Similarly, fixing $\bar \alpha\in (-1,0)$, since $F(1,\bar\alpha)<0$ and $\lim_{t\to +\infty} F(t,\bar\alpha) = 0^+$, it follows that there exists at least one $t(\bar\alpha)>1$ such that $F(t(\bar\alpha),\bar\alpha)=0$.

In conclusion, for every $\alpha\in (-1,n-2]$ there exists at least one $t(\alpha)> 0$ such that $F(t(\alpha),\alpha) = 0$; in other words, for every $\alpha\in(-1,n-2]$ there exists a solution $t(\alpha)>0$ of \eqref{r-EL1-3-n}.

\subsubsection{The equation  \eqref{r-EL1-4-n}.}
Now we solve \eqref{r-EL1-4-n} for the $t(\alpha)$ found above by solving \eqref{r-EL1-3-n} for spheroids and we compute the corresponding $b$. Then the spheroid will be the one with semi-axes $b(\alpha)$ and $a(\alpha)$, with $t(\alpha)=a(\alpha)^2/b(\alpha)^2$. From \eqref{r-EL1-4-n} and by \eqref{useful-b}
\begin{equation*}
0< \sqrt t H(t) =2 - (n-1)\sqrt t \int_0^\infty\frac{d \sigma}{\sqrt{t+\sigma}(1+\sigma)^{\frac{n+1}2}} < 2 \qquad
\text{ for } t>0,
\end{equation*}
we have that, for $\alpha\in (0,n-2]$
\begin{align}\label{eqbpos}
b^n = \frac{1}{\sqrt t}\big(n-2+\alpha - \alpha\sqrt t H(t)\big) >  \frac{1}{\sqrt t}\big(n-2 - \alpha\big)\geq 0, 
\end{align}
and for $\alpha\in (-1,0)$
\begin{align}\label{eqbneg}
b^n = \frac{1}{\sqrt t}\big(n-2+\alpha - \alpha\sqrt t H(t)\big) > \frac{1}{\sqrt t}\big(n-2+\alpha) >0. 
\end{align}
\end{section}

%%%%%%%%%%%%%%%%%%%%%%%%%%%%%%%%%%%%%%%%%%%%%%%%%%%%%%%%%%%%%%%%
%%%%%%%%%%%%%%%%%%%%%%%%%%%%%%%%%%%%%%%%%%%%%%%%%%%%%%%%%%%%%%%%%

\subsection{The condition \eqref{EL-2-real} outside spheroids}\label{sect:EL2-n} In this section we show that for $\alpha\in (-1,n-2]$ and for any spheroid $\Omega(a(\alpha),b(\alpha))$ for which the stationarity condition \eqref{EL-1-real} is satisfied, also the 
unilateral condition \eqref{EL-2-real} is satisfied. This implies that any spheroid $\Omega(a(\alpha),b(\alpha))$ for which the stationarity condition \eqref{EL-1-real} is satisfied is in fact a minimiser for the functional $I_\alpha$, and by Proposition \ref{exist+uniq} it is the unique minimiser (which in particular implies that there is only one spheroid satisfying \eqref{EL-1-real}).

We do it in several steps. We start by evaluating the Coulomb potential, which corresponds to $\alpha=0$, namely 
$$
\Phi_0(x)= \avint_{\!\!\Omega(a,b)}\frac{1}{|x-y|^{n-2}} \,dy,
$$
for $x\in \R^n\setminus\Omega(a,b)$ and $a,b>0$. For $x\in \R^n\setminus\Omega(a,b)$ and $a,b>0$,
\begin{align}\label{Riesz:noellipsoid-n}
\Phi_0(x) =  \frac{n(n-2)}4\int_{\lambda(x)}^\infty \left(1-\frac{x_1^2}{a^2+s} - \frac{r^2}{b^2+s}\right) \frac{ds}{\sqrt{a^2+s}(b^2+s)^{\frac{n-1}2}},
\end{align}
where $r^2=\sum_{i=2}^n x_i^2$ and $\lambda(x)$ is the largest root of the equation
$$
\frac{x_1^2}{a^2+\lambda} +\frac{r^2}{b^2+\lambda}=1.
$$
(see, e.g., \cite{Difratta}).
By straightforward computations one can see that
$$
\lambda(x)= 
\begin{cases}
\bigskip
\displaystyle \frac14\left(\sqrt{x_1^2+(r+c)^2} + \sqrt{x_1^2+(r-c)^2}\right)^2 - b^2 \quad &\textrm{if } a<b,\\
\displaystyle \frac14\left(\sqrt{(x_1+c)^2+r^2} + \sqrt{(x_1-c)^2+r^2}\right)^2 - a^2 \quad &\textrm{if } a>b,
\end{cases}
$$
where
$$
c^2:=\begin{cases}
b^2-a^2  \quad &\textrm{if } a<b,\\
a^2-b^2 \quad &\textrm{if } a>b.
\end{cases}
$$

\subsubsection{The anisotropic potential outside a spheroid}\label{sect:outside-n} In this section we prove that the anisotropic term $\Psi$ of $\Phi_\alpha$, both for oblate and prolate spheroids, is related to the Coulomb potential $\Phi_0$ by the relation
\begin{equation}\label{anis-explicit:E-n}
\Psi(x) = \frac{a^2}{a^2-b^2} \, \Phi_0(x)+ \frac{1}{(n-2)(a^2-b^2)} \nabla\Phi_0(x)\cdot \left( b^2 x_1, a^2 x'\right), \quad \textrm{for } x=(x_1,x')\in \R^n\setminus \Omega(a,b).
\end{equation}
We obtain \eqref{anis-explicit:E-n} by an ingenious differentiation of $\Phi_0$. While in Section \ref{sect:EL1} $\Psi$ on $\Omega(a,b)$ was obtained by differentiating $\Phi_0$ with respect to the aspect ratio $a^2/b^2$ of the spheroid, the geometric quantity that is relevant in this case is the parameter spanning a family of spheroids confocal with $\Omega(a,b)$, and surrounding it from the outside.

We prove \eqref{anis-explicit:E-n} in the case of oblate spheroids, but the case of prolate spheroids is completely analogous. For oblate spheroids we set $a^2=t$ and $b^2=t+c^2$, where $c$ is fixed, and set 
$$
\Phi^t_0(x):=\avint_{\!\!\Omega_t}\frac{1}{|x-y|^{n-2}} \,dy, \quad \Psi^t(x):=\avint_{\!\!\Omega_t}\frac{(x_1-y_1)^2}{|x-y|^n} \,dy, \quad \Omega_t:=\Omega(\sqrt t, \sqrt{t+c^2}). 
$$
From \eqref{Riesz:noellipsoid-n} one can easily rewrite the Coulomb potential on a spheroid as 
\begin{align*}
\Phi_0(x) =  \frac{n(n-2)}4\Big(\int_{\ell(x)}^\infty \frac{1}{\sqrt{\sigma -c^2}\sigma^{\frac{n-1}2}}\,d\sigma -\int_{\ell(x)}^\infty \frac{x_1^2}{(\sigma -c^2)^{3/2}\sigma^{\frac{n-1}2}}\,d\sigma 
- \int_{\ell(x)}^\infty \frac{r^2}{\sqrt{\sigma -c^2}\sigma^{\frac{n+1}2}}\,d\sigma\Big),
\end{align*}
where $\ell(x) = \lambda(x) + b^2$ and $\sigma:=s+b^2$, and $\ell(x)$ depends only on $c$. 
Hence $\Phi_0^t$ depends on its semi-axes $a$ and $b$ only via $c$, and not on $a$ and $b$ separately, namely
\begin{align*}
0=\frac{\partial}{\partial t} \Phi^t_0(x) 
= \frac{\partial}{\partial t}\avint_{\!\!B_1(0)}\!\! \bigg(t\bigg(\frac{x_1}{\sqrt t}-v_1\bigg)^2\!\!\!+(t+c^2)\sum_{i\neq 1}\bigg(\frac{x_i}{\sqrt{t+c^2}}-v_i\bigg)^2\bigg)^{-\frac{n-2}2}\!\!dv.
\end{align*}
By expanding the derivative above and rewriting 
\begin{multline*}
\bigg(\frac{x_1}{\sqrt t}-v_1\bigg)^2 + \sum_{i\neq 1}\bigg(\frac{x_i}{\sqrt{t+c^2}}-v_i\bigg)^2 \\
= \frac{1}{t+c^2}\bigg(t\bigg(\frac{x_1}{\sqrt t}-v_1\bigg)^2\!\!\!+(t+c^2)\sum_{i\neq 1}\bigg(\frac{x_i}{\sqrt{t+c^2}}-v_i\bigg)^2\bigg)
+ \frac{c^2}{t(t+c^2)} t \bigg(\frac{x_1}{\sqrt t}-v_1\bigg)^2,
\end{multline*}
we obtain 
\begin{equation}\label{derivative=0-n}
0=\frac{1}{t+c^2} \, \Phi_0^t(x) + \frac{c^2}{t(t+c^2)}\,\Psi^t(x) + \frac{1}{n-2}\nabla_x \Phi_0^t(x)\cdot \left(\frac{x_1}t, \frac{x'}{t+c^2}\right).
\end{equation}
The expression \eqref{derivative=0-n} gives the (unknown) expression of the anisotropic term $\Psi^t$ in terms of the (known) Coulomb potential $\Phi_0^t$ and its spatial gradient. Substituting $t$ and $c$ in terms of $a$ and $b$ in \eqref{derivative=0-n} and rearranging the terms we then have \eqref{anis-explicit:E-n}.

With the expression of the Coulomb potential $\Phi_0$ (see \eqref{Riesz:noellipsoid-n}) and a closed formula for the anisotropic potential $\Psi$ (in \eqref{anis-explicit:E-n}) in the outer region $\R^n\setminus \Omega(a,b)$ at hand, we now prove \eqref{EL-2-real}. More precisely, we prove that for every $\alpha\in (-1,n-2]$,
\begin{equation}\label{EL-2-s}
\Phi_\alpha(x) + \frac{|x|^2}2 \geq C_\alpha \quad \textrm{for every } x\in \R^n\setminus \Omega(a(\alpha),b(\alpha)),
\end{equation}
where $\Omega(a(\alpha),b(\alpha))$ is a stationary point, satisfying \eqref{EL-1-real}. To prove \eqref{EL-2-s} we  first rewrite the potentials $\Phi_0$ and $\Psi$ (and hence $\Phi_\alpha$) outside a spheroid in a more convenient way, in terms of a set of coordinates -- oblate or prolate spheroidal -- alternative to the Euclidean ones, and more suitable for the geometry of the problem. 

We deal  with the oblate and prolate case separately.

\subsubsection{The condition \eqref{EL-2-s} outside an oblate spheroid.}\label{checkEL2-n}

To prove \eqref{EL-2-s} for an oblate spheroid, we rewrite the potentials $\Phi_0$ in \eqref{Riesz:noellipsoid-n} and $\Psi$ in \eqref{anis-explicit:E-n} outside a spheroid in terms of the \textit{oblate spheroidal coordinates}. By the symmetry of $\Phi_\alpha$ and of the confinement, it is sufficient to reduce to computations in the $x_1 x_2$-plane. In terms of oblate spheroidal coordinates we have 
$$
\begin{cases}
x_1= cz\rho\\
x_2= c\sqrt{(1+z^2)(1-\rho^2)}
\end{cases}
\quad z\geq 0, \, \rho\in [-1,1],
$$
where we recall that $c^2= b^2-a^2$. Note that the outer region $\R^n\setminus \Omega(a,b)$ corresponds to $z\geq \frac{a}{c}$. 

For $z\geq \frac ac$ and $\rho\in [-1,1]$, the expression of the Coulomb potential \eqref{Riesz:noellipsoid-n} in oblate spheroidal coordinates reads as
\begin{align*}
\Phi_0(z,\rho) = \frac{n(n-2)}4 \rho^2\int_{c^2z^2}^\infty\frac{c^2(\sigma-c^2z^2)}{\sigma^{3/2}(\sigma+c^2)^{\frac{n+1}2}}\, d\sigma
+ \frac{n(n-2)}4\int_{c^2z^2}^\infty\frac{\sigma-c^2z^2}{\sqrt\sigma(\sigma+c^2)^{\frac{n+1}2}}\, d\sigma,
\end{align*}
where we used that $r^2=x_2^2$, $\lambda(x)=c^2z^2-a^2$ and the change of variables $\sigma=a^2+s$.
We recall that the gradient of the oblate spheroidal coordinates with respect to Cartesian coordinates is given by the following formulas:
\begin{eqnarray*}
\nabla \rho (x) & = & \dfrac1{c(z^2+\rho^2)}\big(z(1-\rho^2), -\rho\sqrt{(1+z^2)(1-\rho^2)}\big),
\\
\nabla z (x) &  = & \dfrac1{c(z^2+\rho^2)}\big(\rho(1+z^2), z\sqrt{(1+z^2)(1-\rho^2)}\big).
\end{eqnarray*}
Since 
\begin{align*}
\partial_z \Phi_0(z,\rho) &= -\frac{n(n-2)}2 c^2 z\int_{c^2z^2}^\infty\frac{c^2\rho^2+\sigma}{\sigma^{3/2}(\sigma+c^2)^{\frac{n+1}2}}\, d\sigma,\\
\partial_\rho \Phi_0(z,\rho) &= -\frac{n(n-2)}2 c^2 \rho\int_{c^2z^2}^\infty\frac{c^2 z^2-\sigma}{\sigma^{3/2}(\sigma+c^2)^{\frac{n+1}2}}\, d\sigma,
\end{align*}
we deduce that 
\begin{align*}
\nabla \Phi_0(x) = -\frac{c \, n (n-2)}2\left( \int_{c^2z^2}^\infty\frac{z\rho\, d\sigma}{\sigma^{3/2}(\sigma+c^2)^{\frac{n-1}2}}, 
\int_{c^2z^2}^\infty\frac{\sqrt{(1+z^2)(1-\rho^2)} \, d\sigma}{\sqrt\sigma(\sigma+c^2)^{\frac{n+1}2}}\right), 
\end{align*}
and 
\begin{multline*}
- \frac{1}{c^2} \nabla\Phi_0(x)\cdot(b^2x_1,a^2x_2) \\
= \frac{n(n-2)}{4} \left(\int_{c^2z^2}^\infty\frac{2b^2z^2\rho^2}{\sigma^{3/2}(\sigma+c^2)^{\frac{n-1}2}}\,  d\sigma+
\int_{c^2z^2}^\infty\frac{2a^2(1+z^2)(1-\rho^2)}{\sqrt\sigma(\sigma+c^2)^{\frac{n+1}2}}\,  d\sigma\right).
\end{multline*}
Hence the anisotropic potential is 
\begin{align*}
\Psi(x) =  & \frac{n}{4} \rho^2 \int_{c^2z^2}^\infty  
\frac{-na^2(\sigma-c^2z^2)+2c^2z^2(\sigma+c^2)}{\sigma^{3/2}(\sigma+c^2)^{\frac{n+1}2}}\,  d\sigma
\\
& +
\frac{n}{4} \int_{c^2z^2}^\infty 
\frac{-na^2(\sigma-c^2z^2)+2a^2(\sigma+c^2)}{c^2\sqrt{\sigma}(\sigma+c^2)^{\frac{n+1}2}}\,  d\sigma .
\end{align*}
Finally, the confinement term $|x|^2/2$, in terms of the spheroidal coordinates, is 
$$
\frac{|x|^2}{2}
= \frac{c^2}2(1-\rho^2+z^2).
$$
Note that $\Phi_0$, $\Psi$ and the confinement are all quadratic functions in the variable $\rho$. More precisely, for $z\geq \frac ac$ and $\rho\in [-1,1]$,
\begin{align}\label{quadratic:outside+}
\Phi_\alpha(x)+\frac{|x|^2}{2}
= A_\alpha(z)+B_\alpha(z)\rho^2,
\end{align}
where 
\begin{align*}
A_\alpha(z):=& \frac{n}{4} \int_{c^2z^2}^\infty \frac{\big( (n-2)c^2 - n\alpha  a^2\big)(\sigma-c^2z^2)
+ 2\alpha a^2 (\sigma+c^2)}{c^2\sqrt{\sigma}(\sigma+c^2)^{\frac{n+1}2}}\,  d\sigma + \frac{c^2}2(1+z^2),\\
B_\alpha(z):=& \frac{n}{4} \int_{c^2z^2}^\infty \frac{\big( (n-2)c^2 - n\alpha  a^2\big)(\sigma-c^2z^2)
+ 2\alpha c^2z^2 (\sigma+c^2)}{\sigma^{3/2}(\sigma+c^2)^{\frac{n+1}2}}\,  d\sigma - \frac{c^2}2.
\end{align*}
By \eqref{quadratic:outside+}, proving \eqref{EL-2-s} is now equivalent to show that
\begin{equation}\label{MG-equiv}
A_\alpha(z)+B_\alpha(z)\geq C_\alpha \quad \textrm{and } \quad A_\alpha(z)\geq C_\alpha \quad \textrm{for } z\geq \frac ac,
\end{equation}
namely to check the inequality for $\rho=0$ and $\rho=1$. Moreover, note that the functions $A_\alpha$ and $B_\alpha$ are well-defined and smooth at every $z>0$. Since the potential $\Phi_\alpha + |\cdot|^2/2$ belongs to $C^1(\R^2)$ and satisfies \eqref{EL-1-real}, equation \eqref{quadratic:outside+} implies that
\begin{equation}\label{subc}
A_\alpha\Big(\frac{a}{c}\Big)=A_\alpha\Big(\frac{a}{c}\Big)+ B_\alpha\Big(\frac{a}{c}\Big)=C_\alpha
\qquad \text{ and } \qquad A_\alpha'\Big(\frac{a}{c}\Big)=A_\alpha'\Big(\frac{a}{c}\Big)+ B'_\alpha\Big(\frac{a}{c}\Big)=0.
\end{equation}

We show that \eqref{MG-equiv} is satisfied by proving that 
\begin{equation}\label{claimABz2}
\Big(\frac1z \big(A'_\alpha(z)+B'_\alpha(z)\big)\Big)'\geq 0 \quad \textrm{and } \quad \Big(\frac1z A'_\alpha(z)\Big)'\geq 0 \quad \textrm{for } z\geq \frac ac.
\end{equation}
Clearly \eqref{claimABz2}, together with the second condition in \eqref{subc}, implies 
$$
A'_\alpha(z)+ B'_\alpha(z)\geq 0 \quad \textrm{and } \quad A'_\alpha(z)\geq 0 \quad \textrm{for } z\geq \frac ac,
$$
which, in turn, gives \eqref{MG-equiv} owing to the first condition in \eqref{subc}.
While \eqref{claimABz2} may look more complicated, it actually gives rise to simpler computations. Indeed we have 
$$
 \Big(\frac1z A'_\alpha(z)\Big)' = \frac{n}{c^{n-2} z^2(1+z^2)^{\frac{n+1}2}}\Big((n-2)z^2+\alpha \frac{a^2}{c^2}\Big) \geq 0 \quad \textrm{for every } z\geq \frac ac,
$$
since $n\geq 3$ and $\alpha\geq 0$. Moreover,
\begin{align*}
\Big(\frac1z \big(A'_\alpha(z)+B'_\alpha(z)\big)\Big)' = \frac{n}{c^{n-2} z^2(z^2+1)^{\frac{n+1}2}} \Big(
(n-2)(1+\alpha)z^2 +n-2-\alpha -\alpha \frac{a^2}{c^2}(n-1)\Big).
\end{align*}
Hence to prove the claim \eqref{claimABz2} it is sufficient to show that 
$$
(n-2)(1+\alpha)z^2 +n-2-\alpha -\alpha \frac{a^2}{c^2}(n-1)\geq0
 \qquad \text{ for } z\geq \frac ac.
$$
This condition is satisfied since
$$
(n-2)(1+\alpha) \frac{a^2}{c^2} +n-2-\alpha -\alpha \frac{a^2}{c^2}(n-1)
= (n-2-\alpha)\left( 1+\frac{a^2}{c^2}\right)\geq0.
$$

 %%%%%%%%% 

\subsubsection{The condition \eqref{EL-2-s} outside a prolate spheroid.}\label{checkEL2-n-pro}

To prove \eqref{EL-2-s} for a prolate spheroid, we rewrite the potentials $\Phi_0$ in \eqref{Riesz:noellipsoid-n} and $\Psi$ in \eqref{anis-explicit:E-n} outside a spheroid in terms of the \textit{prolate spheroidal coordinates}. By the symmetry of $\Phi_\alpha$ and of the confinement, it is sufficient to reduce to computations in the $x_1 x_2$-plane. In terms of prolate spheroidal coordinates we have 
$$
\begin{cases}
x_1= cz\rho\\
x_2= c\sqrt{(z^2-1)(1-\rho^2)}
\end{cases}
\quad z\geq 1, \, \rho\in [-1,1],
$$
where we recall that now $c^2= a^2-b^2$. The outer region $\R^n\setminus \Omega(a,b)$ corresponds also in this case to $z\geq \frac{a}{c}$. 

For $z\geq \frac ac$ and $\rho\in [-1,1]$, the expression of the Coulomb potential \eqref{Riesz:noellipsoid-n} in prolate spheroidal coordinates reads as
\begin{align*}
\Phi_0(z,\rho) = \frac{n(n-2)}4 \rho^2\int_{c^2z^2}^\infty\frac{c^2(c^2z^2-\sigma)}{\sigma^{3/2}(\sigma-c^2)^{\frac{n+1}2}}\, d\sigma
+ \frac{n(n-2)}4\int_{c^2z^2}^\infty\frac{\sigma-c^2z^2}{\sqrt\sigma(\sigma-c^2)^{\frac{n+1}2}}\, d\sigma,
\end{align*}
where we used that $r^2=x_2^2$, $\lambda(x)=c^2z^2-a^2$ and a change of variables.
We recall that the gradient of the prolate spheroidal coordinates with respect to Cartesian coordinates is given by the following formulas:
\begin{eqnarray*}
\nabla \rho (x) & = & \dfrac1{c(z^2-\rho^2)}\big(z(1-\rho^2), -\rho\sqrt{(z^2-1)(1-\rho^2)}\big),
\\
\nabla z(x) &  = & \dfrac1{c(z^2-\rho^2)}\big(\rho(z^2-1), z\sqrt{(z^2-1)(1-\rho^2)}\big).
\end{eqnarray*}
Since 
\begin{align*}
\partial_z \Phi_0(z,\rho) &= \frac{n(n-2)}2 c^2 z\int_{c^2z^2}^\infty\frac{c^2\rho^2-\sigma}{\sigma^{3/2}(\sigma-c^2)^{\frac{n+1}2}}\, d\sigma,\\
\partial_\rho \Phi_0(z,\rho) &= \frac{n(n-2)}2 c^2 \rho\int_{c^2z^2}^\infty\frac{c^2 z^2-\sigma}{\sigma^{3/2}(\sigma-c^2)^{\frac{n+1}2}}\, d\sigma,
\end{align*}
we deduce that 
\begin{align*}
\nabla \Phi_0(x) = -\frac{c \, n (n-2)}2\left( \int_{c^2z^2}^\infty\frac{z\rho\, d\sigma}{\sigma^{3/2}(\sigma-c^2)^{\frac{n-1}2}}, 
\int_{c^2z^2}^\infty\frac{\sqrt{(z^2-1)(1-\rho^2)} \, d\sigma}{\sqrt\sigma(\sigma-c^2)^{\frac{n+1}2}}\right), 
\end{align*}
and 
\begin{multline*}
\frac{1}{c^2} \nabla\Phi_0(x)\cdot(b^2x_1,a^2x_2) \\
= -\frac{n(n-2)}{4} \left(\int_{c^2z^2}^\infty\frac{2b^2z^2\rho^2}{\sigma^{3/2}(\sigma-c^2)^{\frac{n-1}2}}\,  d\sigma+
\int_{c^2z^2}^\infty\frac{2a^2(z^2-1)(1-\rho^2)}{\sqrt\sigma(\sigma-c^2)^{\frac{n+1}2}}\,  d\sigma\right).
\end{multline*}
Hence the anisotropic potential is 
\begin{align*}
\Psi(x) =  & \frac{n}{4} \rho^2 \int_{c^2z^2}^\infty  
\frac{-na^2(\sigma-c^2z^2)+2c^2z^2(\sigma-c^2)}{\sigma^{3/2}(\sigma-c^2)^{\frac{n+1}2}}\,  d\sigma
\\
& +
\frac{n}{4} \int_{c^2z^2}^\infty 
\frac{na^2(\sigma-c^2z^2)-2a^2(\sigma-c^2)}{c^2\sqrt{\sigma}(\sigma-c^2)^{\frac{n+1}2}}\,  d\sigma .
\end{align*}

Finally, the confinement term $|x|^2/2$, in terms of the spheroidal coordinates, is 
$$
\frac{|x|^2}{2}
= \frac{c^2}2(-1+\rho^2+z^2).
$$
Note that, as before, $\Phi_0$, $\Psi$ and the confinement are all quadratic functions in the variable $\rho$. More precisely, for $z\geq \frac ac$ and $\rho\in [-1,1]$,
\begin{align}\label{quadratic:outside1}
\Phi_\alpha(x)+\frac{|x|^2}{2}
= A_\alpha(z)+B_\alpha(z)\rho^2,
\end{align}
where 
\begin{align*}
A_\alpha(z):=& \frac{n}{4} \int_{c^2z^2}^\infty \frac{\big( (n-2)c^2 + n\alpha  a^2\big)(\sigma-c^2z^2)
- 2\alpha a^2 (\sigma-c^2)}{c^2\sqrt{\sigma}(\sigma-c^2)^{\frac{n+1}2}}\,  d\sigma + \frac{c^2}2(z^2-1),\\
B_\alpha(z):=& \frac{n}{4} \int_{c^2z^2}^\infty \frac{\big(-(n-2)c^2 - n\alpha  a^2\big)(\sigma-c^2z^2)
+ 2\alpha c^2z^2 (\sigma-c^2)}{\sigma^{3/2}(\sigma-c^2)^{\frac{n+1}2}}\,  d\sigma + \frac{c^2}2.
\end{align*}
Note that, by \eqref{quadratic:outside1}, proving \eqref{EL-2-s} is equivalent to show that
$$
A_\alpha(z)+B_\alpha(z)\geq C_\alpha \quad \textrm{and } \quad A_\alpha(z)\geq C_\alpha \quad \textrm{for } z\geq \frac ac,
$$
namely to check the inequality for $\rho=0$ and $\rho=1$. 
Arguing as in the case of an oblate spheroid, it is in fact sufficient to prove that
\begin{equation}\label{claimABz-22}
\Big(\frac1z \big(A'_\alpha(z)+B'_\alpha(z)\big)\Big)'\geq 0 \quad \textrm{and } \quad \Big(\frac1z A'_\alpha(z)\Big)'\geq 0 \quad \textrm{for } z\geq \frac ac.
\end{equation}
We have 
$$
 \Big(\frac1z A'_\alpha(z)\Big)' = \frac{n}{c^{n-2} z^2(z^2-1)^{\frac{n+1}2}}\Big((n-2)z^2+\alpha \frac{a^2}{c^2}\Big) \geq 0 \quad \textrm{for every } z\geq \frac ac,
$$
since $n\geq 3$ and $\alpha>-1$. Moreover,
\begin{align*}
\Big(\frac1z \big(A'_\alpha(z)+B'_\alpha(z)\big)\Big)' = \frac{n}{c^{n-2} z^2(z^2-1)^{\frac{n+1}2}} \Big(
(n-2)(1+\alpha)z^2 -(n-2)+\alpha -\alpha \frac{a^2}{c^2}(n-1)\Big).
\end{align*}
Hence to prove the claim \eqref{claimABz-22} it is sufficient to show that 
$$
(n-2)(1+\alpha)z^2 -(n-2)+\alpha -\alpha \frac{a^2}{c^2}(n-1)\geq0
 \qquad \text{ for } z\geq \frac ac.
$$
This condition is satisfied because $\alpha>-1$ and
$$
(n-2)(1+\alpha) \frac{a^2}{c^2} -n+2+\alpha -\alpha \frac{a^2}{c^2}(n-1)
= (n-2-\alpha)\left(\frac{a^2}{c^2}-1\right)\geq0.
$$

This concludes the proof of Theorem~\ref{characterisation}.

\begin{remark}\label{rmk:ta-ci}
For every $\alpha\in (-1,n-2]$, let $t(\alpha)>0$ be the solution of the equation $F(t,\alpha)=0$ found in Section~\ref{sec313}.
Note that this solution is unique. Indeed, by \eqref{eqbpos}--\eqref{eqbneg}, for a given $\alpha\in (-1,n-2]$, any solution $t(\alpha)>0$ of $F(t,\alpha)=0$ identifies a spheroid 
satisfying the stationarity condition \eqref{EL-1-real}. Moreover, in Section~\ref{sect:EL2-n} we show that any stationary spheroid satisfies also condition \eqref{EL-2-real}, so it 
is a minimiser of $I_\alpha$, and hence is unique by strict convexity. 
Moreover, the function
$t: \alpha\in (-1,n-2] \mapsto t(\alpha)\in (0,+\infty)$ is continuous 
and strictly decreasing. To prove it, let $\alpha_0\in (-1,n-2]$ and let $\alpha\to \alpha_0$. Let $(\alpha_k)$ denote a subsequence converging monotonically to $\alpha_0$, as $k\to +\infty$; note that the subsequence $(I_{\alpha_k})$ is also monotone. Since $I_{\alpha_k}$ is lower semicontinuous for every $k$,  we have that $I_{\alpha_k}$ $\Gamma$-converges to $I_{\alpha_0}$, as $k\to+\infty$, with respect to narrow convergence (see, e.g., \cite[Propositions~5.4 and~5.7]{DM}). By the Urysohn property of $\Gamma$-convergence (see for instance \cite[Proposition 8.3]{DM}) the whole sequence $(I_\alpha)$ $\Gamma$-converges to $I_{\alpha_0}$, as $\alpha\to\alpha_0$, since the space of probability measures endowed with the narrow convergence is metrisable. Moreover, the equilibrium measures $\mu_\alpha$ satisfy
$$
\int_{\R^n}|x|^2\, d\mu_\alpha(x)\leq I_\alpha(\mu_\alpha)\leq I_\alpha(\mu_{n-2})\leq  I_{n-2}(\mu_{n-2}) \qquad
\text{ for every } \alpha\in(-1,n-2],
$$
where we used the minimality of $\mu_\alpha$. In other words, the equilibrium measures are tight.
By the Fundamental Theorem of $\Gamma$-convergence (see, e.g., \cite[Corollary~7.17]{DM}) and the uniqueness of the minimiser of $I_{\alpha_0}$
we deduce that $\mu_\alpha$ converges narrowly to $\mu_{\alpha_0}$, as $\alpha\to\alpha_0$.
The characterisation of $\mu_\alpha$ given in Theorem~\ref{characterisation} allows us to conclude that $t(\alpha)\to t(\alpha_0)$, as $\alpha\to\alpha_0$, that is, the function $\alpha\mapsto t(\alpha)$ is continuous. It is also injective by the following argument: If $\alpha_1,\alpha_2\in(-1,n-2]$, 
$\alpha_1\neq\alpha_2$, are such that $t(\alpha_1)=t(\alpha_2)=t_0>0$, then $F(t_0,\alpha_1)=0=F(t_0,\alpha_2)$,
which implies $A(t_0)=0$, with $A$ defined in \eqref{r-AandB-n}. Since $F(t_0,\alpha_1)=0$, we deduce that $B(t_0)=0$, but this would imply that $F(t_0,\alpha)=0$
for every $\alpha\in (-1,n-2]$, that is, the function $\alpha\mapsto t(\alpha)$ is constant. This is not possible, since $t(\alpha)>1$ for $\alpha<0$ and 
$t(\alpha)<1$ for $\alpha>0$. This last property, together with continuity and injectivity, implies that $\alpha\mapsto t(\alpha)$ is strictly decreasing. 
\end{remark}

\begin{section}{The limiting case, as $\alpha\to-1$}\label{Section-1}

In this section we discuss the behaviour of the nonlocal energies $I_\alpha$, as  $\alpha\to-1$.

\begin{theorem}\label{thm:-1}
As $\alpha\to-1^+$, the functionals $I_\alpha$ $\Gamma$-converge, with respect to narrow convergence, to a functional
$J_\ast:\mathcal P(\R^n)\to[0,+\infty]$, whose unique minimiser is a measure $\mu_\ast$ 
that is the normalised characteristic function of a prolate spheroid $\Omega_\ast$.
Moreover, the following representation holds:
\begin{equation}\label{Jast-rep}
J_\ast (\mu)= \int_{\R^n} \widehat{W}_\ast(\xi) |\widehat\mu(\xi)|^2\, d\xi + \int_{\mathbb{R}^n}
|x|^2  \,d\mu(x)
\end{equation}
for every $\mu\in \mathcal P(\R^n)$ with compact support, where
$\widehat W_\ast\in L^1_{\text{loc}}(\R^n)$ is the function given by
$$
\widehat W_\ast(\xi):=\frac{\pi^{\frac n2-2}}{2\Gamma(\frac n2)}  \frac{(n-1) \xi_1^2 + (n-3) \sum_{i=2}^n \xi_i^2}{|\xi|^4}  
$$
for a.e.\ $\xi\in\R^n$. 
\end{theorem}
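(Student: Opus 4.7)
The plan is to proceed in four steps by first establishing $\Gamma$-convergence via monotonicity, then deducing the Fourier representation on compactly supported measures, then obtaining existence and uniqueness of the minimiser, and finally identifying its shape. On probability measures of finite interaction energy, $\alpha \mapsto I_\alpha(\mu)$ is nondecreasing, since the anisotropic part of $W_\alpha$ has the nonnegative integrand $(x_1-y_1)^2/|x-y|^n$. Hence as $\alpha \searrow -1$ the family $\{I_\alpha\}$ is pointwise nonincreasing and, by Proposition~\ref{exist+uniq}, each $I_\alpha$ with $\alpha>-1$ is narrowly lower semicontinuous. By the standard criterion for $\Gamma$-convergence of monotone families (e.g.\ \cite[Proposition~5.7]{DM}), the $\Gamma$-limit $J_\ast$ exists and coincides with the narrow lower semicontinuous envelope of $\lim_{\alpha\to-1^+}I_\alpha$; by metrisability of narrow convergence and the Urysohn property used already in Remark~\ref{rmk:ta-ci}, the limit does not depend on the approximating sequence.

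For the representation \eqref{Jast-rep}, I would start from the Fourier identity $I_\alpha(\mu) = \int \widehat W_\alpha\,|\widehat\mu|^2\,d\xi + \int |x|^2\,d\mu$ of Proposition~\ref{exist+uniq}, valid on compactly supported $\mu$, and let $\alpha \searrow -1$: by monotonicity in $\alpha$ of the total Fourier integral and pointwise convergence $\widehat W_\alpha(\xi) \to \widehat W_\ast(\xi)$, Beppo Levi yields the limit $\int \widehat W_\ast\,|\widehat\mu|^2\,d\xi$, so the pointwise limit of $I_\alpha(\mu)$ coincides with the right-hand side of \eqref{Jast-rep}. On sequences with a common compact support, that right-hand side is itself narrowly lower semicontinuous since $\widehat{\mu_k}\to\widehat\mu$ pointwise and $\widehat W_\ast \ge 0$ (as $(n-1)\xi_1^2 + (n-3)\sum_{i\ge 2}\xi_i^2 \geq 0$ for $n\geq 3$), Fatou doing the rest; combined with the previous step, this establishes the representation of $J_\ast$ on compactly supported measures.

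For existence and uniqueness of the minimiser, minimality and monotonicity provide the uniform bound $\int |x|^2\,d\mu_\alpha \le I_\alpha(\mu_\alpha) \le I_{\alpha_0}(\mu_{\alpha_0})$ for any fixed $\alpha_0 \in (-1, n-2]$, hence tightness of $(\mu_\alpha)$, and the fundamental theorem of $\Gamma$-convergence then produces a narrow limit point $\mu_\ast$ minimising $J_\ast$. Uniqueness follows from strict convexity of $J_\ast$ on compactly supported probability measures with finite energy: by \eqref{Jast-rep},
\[
\int_{\R^n} \widehat W_\ast(\xi)\,|\widehat{\nu_1}(\xi)-\widehat{\nu_2}(\xi)|^2\,d\xi > 0 \qquad \text{whenever } \nu_1 \neq \nu_2,
\]
since for $n\geq 4$ one has $\widehat W_\ast > 0$ almost everywhere, while for $n = 3$, $\widehat W_\ast$ vanishes only on the hyperplane $\{\xi_1=0\}$, but $\widehat{\nu_1-\nu_2}$ extends to an entire function on $\C^n$ by compactness of supports, so its vanishing on the non-empty open set $\{\xi_1 \neq 0\}$ forces $\nu_1 = \nu_2$; full narrow convergence $\mu_\alpha \to \mu_\ast$ then follows.

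Finally, to identify $\mu_\ast$ as the normalised characteristic function of a prolate spheroid, I would pass to the limit in Theorem~\ref{characterisation}: by Remark~\ref{rmk:ta-ci}, $\alpha \mapsto t(\alpha) > 1$ is continuous and strictly decreasing on $(-1, 0)$, so $t(\alpha) \nearrow t_\ast \in (1, +\infty]$, while \eqref{r-EL1-4-n} determines $b(\alpha)$ with a limit $b_\ast \in [0,+\infty]$. The main technical obstacle is excluding the degenerate cases $t_\ast = +\infty$ or $b_\ast \in \{0,+\infty\}$, corresponding to the spheroid collapsing to a segment or escaping to infinity: tightness from the previous step rules out escape, and for $n \geq 4$ the asymptotics $\sqrt{t}\,F(t,\alpha) \to n-2+\alpha$ as $t \to +\infty$ force $t_\ast < +\infty$ since the limit $n-3$ at $\alpha = -1$ is strictly positive, contradicting $F(t(\alpha),\alpha) = 0$ along a sequence with $t(\alpha) \to \infty$; the delicate case $n = 3$, where this leading asymptotics degenerates, seems to require either a finer expansion of $H(t)$ at $+\infty$ or direct use of the non-degeneracy of $\mu_\ast$ coming from strict convexity of $J_\ast$. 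Once a non-degenerate prolate pair $(a_\ast, b_\ast)$ is obtained, passing to the narrow limit in $\mu_\alpha = |\Omega(a(\alpha), b(\alpha))|^{-1}\chi_{\Omega(a(\alpha),b(\alpha))}$ identifies $\mu_\ast$ with the normalised characteristic function of $\Omega_\ast = \Omega(a_\ast, b_\ast)$, completing the proof.
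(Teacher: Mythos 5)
Your overall strategy (monotone $\Gamma$-convergence, Fourier representation, strict convexity, limit of the spheroids from Theorem~\ref{characterisation}) is the paper's, but the central step --- the representation \eqref{Jast-rep} --- is not established by your argument. You compute the pointwise limit $\lim_{\alpha\to-1^+}I_\alpha(\mu)$ and assert it equals the right-hand side of \eqref{Jast-rep}; this is false in general. For $n=3$ take $\mu$ to be the uniform measure on a segment of the $x_1$-axis: then $\int \xi_1^2|\xi|^{-4}|\widehat\mu|^2\,d\xi<+\infty$ while $\int(\xi_2^2+\xi_3^2)|\xi|^{-4}|\widehat\mu|^2\,d\xi=+\infty$, so $I_\alpha(\mu)=+\infty$ for every $\alpha>-1$ (the coefficient $n-2+\alpha$ is strictly positive), yet the right-hand side of \eqref{Jast-rep} is finite. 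The $\Gamma$-limit is the relaxation of the pointwise limit, and the relaxation is genuinely strict here; moreover the infimum in the relaxation runs over \emph{all} narrowly converging sequences, not only those with a common compact support, so your Fatou argument does not give the $\Gamma$-liminf bound. The paper needs two extra devices that you skip: for the lower bound, an arbitrary competitor sequence $\nu_k$ with bounded energy is truncated to a fixed compact set $K$ and renormalised, and the resulting $\mu_k$ is shown to satisfy $I_{\alpha_k}(\mu_k)\le I_{\alpha_k}(\nu_k)$ before \eqref{Fourier:space} and Fatou are applied; for the upper bound, the identity is first proved for smooth compactly supported densities by dominated convergence and then extended by mollification together with the lower semicontinuity of $J_\ast$.

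Two further gaps. First, uniqueness: strict convexity of the right-hand side of \eqref{Jast-rep} (your analyticity argument for $n=3$ is fine, and in fact more explicit than the paper's) only controls minimisers with compact support, since the representation is only proved there; you must also show that \emph{every} minimiser of $J_\ast$ is compactly supported, which the paper does by contradiction using the same truncation trick. Second, you explicitly leave open the case $n=3$ in excluding $t_\ast=+\infty$; this is exactly where the leading asymptotics $A\alpha+B\to n-3$ degenerates, and the paper resolves it by the finer computation
\begin{equation*}
-A(t)+B(t)=-\frac{5t+4}{(t-1)^2}+\frac{\sqrt t}{2(t-1)^{5/2}}(2t+7)\int_1^t\frac{ds}{\sqrt{s(s-1)}},
\end{equation*}
which is positive for large $t$ and contradicts the sign forced by $F(t(\alpha),\alpha)=0$ with $\alpha\in(-1,0)$. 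Note also that tightness alone does not identify the limit shape: the paper derives $b_\ast>0$ directly from \eqref{eqbneg} once $t_\ast<+\infty$ is known, rather than from compactness of the family of minimisers.
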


\begin{remark}\label{asymrem}
The functional $J_\ast$ does not coincide with the functional $I_{-1}$ defined in Remark~\ref{alfaneg}, since $J_\ast$ is lower semicontinuous, whereas $I_{-1}$ is not. Moreover $J_\ast$ does not coincide with the functionals $\overline{I_{-1}}$ or $\widetilde I_{-1}$ either, since the Dirac delta at $0$ is a minimiser for both $\overline{I_{-1}}$ and $\widetilde I_{-1}$.
\end{remark}

\begin{proof}[Proof of Theorem~\ref{thm:-1}]
The proof is subdivided into several steps.\smallskip

\noindent \textit{Step~1: Convergence of the equilibrium measures.} 
We prove that, as $\alpha\to-1^+$, the equilibrium measures $\mu_\alpha$ converge
to a measure $\mu_\ast$, that is the normalised characteristic function of a prolate spheroid $\Omega_\ast$.
First of all, we claim that 
\begin{equation}\label{tast}
\lim_{\alpha\to-1^+} t(\alpha)=t_\ast \in\R
\end{equation}
for some $t_\ast>1$. 
Since $\alpha\mapsto t(\alpha)$ is strictly decreasing by Remark~\ref{rmk:ta-ci} and $t(\alpha)>1$ for $\alpha<0$,
the limit as $\alpha\to-1^+$ exists and is strictly greater than $1$.
Assume by contradiction that $t_\ast=+\infty$. 
By \eqref{limABinf} we can pass to the limit in the equation
\begin{equation}\label{nn}
0=\sqrt{t(\alpha)} F(t(\alpha), \alpha)= A(t(\alpha))\alpha + B(t(\alpha)) 
\end{equation}
and deduce that $n-3=0$, which gives a contradiction for $n>3$. If $n=3$, equation \eqref{nn}, together with the assumption that $t_\ast=+\infty$, implies that
\begin{equation}\label{oldC}
-A(t(\alpha))+ B(t(\alpha))<0 
\end{equation}
for $\alpha+1>0$ small enough (note that $A(t(\alpha))>0$ for $\alpha+1>0$ small enough by \eqref{limABinf}).
By \eqref{newH} and \eqref{newH'}, for $t>1$ and $n=3$ we obtain
$$
-A(t)+ B(t)= -\frac{5t+4}{(t-1)^2}+\frac{\sqrt t}{2(t-1)^{5/2}}(2t+7)\int_1^t  \frac1{\sqrt{s(s-1)}}\, ds.
$$
Since the right-hand side is positive as $t\to+\infty$, this contradicts \eqref{oldC}.
Claim \eqref{tast} is thus proved for every $n\geq3$.
Passing to the limit in \eqref{eqbneg} we also obtain that
$$
\lim_{\alpha\to-1^+} b(\alpha) = b_\ast:=(t_\ast)^{-\frac{1}{2n}}\big(n-3+\sqrt{t_\ast}H(t_\ast)\big)^{\frac1n}>0. 
$$
This implies that the equilibrium measures $\mu_\alpha$ converge narrowly, as $\alpha\to-1^+$, to
the normalised characteristic function of the prolate spheroid $\Omega_\ast=\Omega(\sqrt{t_\ast}b_\ast,b_\ast)$.\smallskip

\noindent \textit{Step~2: $\Gamma$-convergence.} 
Let $(\alpha_k)$ be a sequence such that $\alpha_k\to-1^+$, as $k\to+\infty$.
Note that we can extract a decreasing subsequence $(\alpha_{k_j})\searrow -1^+$ along which also the sequence of functionals $(I_{\alpha_{k_j}})$ is decreasing, and hence $\Gamma$-convergent as $j\to +\infty$ to the functional
$$
J_\ast(\mu):=\overline J(\mu), \qquad J(\mu):=\inf_{\alpha\in(0,-1)} I_\alpha(\mu) 
$$
for every $\mu\in\mathcal P(\R^n)$, where $\overline J$ denotes the lower semicontinuous envelope of $J$ with respect to narrow convergence. By the Urysohn property of $\Gamma$-convergence (see for instance \cite[Proposition 8.3]{DM}) the whole sequence $(I_\alpha)$ $\Gamma$-converges to $J_\ast$, as $\alpha\to -1^+$.
By the Fundamental Theorem of $\Gamma$-convergence we deduce that $\mu_\ast$ is a minimiser of $J_\ast$ and
\begin{equation}\label{fundG}
\lim_{\alpha\to-1^+} I_{\alpha}(\mu_{\alpha})= J_\ast(\mu_\ast).
\end{equation} 

\noindent \textit{Step~3: Representation formula for $J_\ast$.} 
Using formula \eqref{Fourier:space}, which holds for every $\nu\in\mathcal P(\R^n)$ with compact support and every $\alpha\in(-1,n-2]$, we can now prove the representation formula~\eqref{Jast-rep}.

We first observe that by \eqref{FT3d}
\begin{equation}\label{aeW}
\widehat W_\alpha(\xi)\to\widehat W_\ast(\xi) \qquad \text{ for a.e.\ } \xi\in\R^n,
\end{equation}
as $\alpha\to-1^+$, and there exists a constant $C$, independent of $\alpha$, such that
\begin{equation}\label{dominW}
0\leq \widehat W_\alpha\leq C \widehat W_0
\end{equation}
for every $\alpha\in (-1,n-2]$.
Let now $\nu\in \mathcal P(\R^n)$ be a measure with compact support. Let $(\alpha_k)\subset (-1,n-2]$ be any sequence converging to $-1$, and 
let $(\nu_k)$ be a sequence in $\mathcal P(\R^n)$ converging narrowly to $\nu$ and such that
$$
\liminf_{k\to\infty} I_{\alpha_k}(\nu_k)<+\infty.
$$
Up to subsequences, we can assume that $\sup_k I_{\alpha_k}(\nu_k)<+\infty$. By \eqref{bound:compact}
there exists a compact set $K\subset\R^n$, containing the support of $\nu$, such that $\nu_k(K)>0$ and
$$
W_{\alpha_k}(x-y) + \frac12(|x|^2+|y|^2)\geq  I_{\alpha_k}(\nu_k) +1 \qquad
\text{ for } (x,y)\not\in K\times K
$$
for every $k$. If we define 
\begin{equation}\label{defmuK}
\mu_k:=\frac{\nu_k\mres K}{\nu_k(K)}
\end{equation}
for every $k$, then $\mu_k\in \mathcal P(\R^n)$ has compact support and
\begin{eqnarray*}
I_{\alpha_k}(\nu_k) & =  &
\big(\nu_k(K)\big)^2 I_{\alpha_k}(\mu_k) +  \iint_{(K\times K)^c}\Big( W_{\alpha_k}(x-y) + \frac12(|x|^2+|y|^2)\Big)\, d\nu_k(x)\,d\nu_k(y)
\\
& \geq  & \big(\nu_k(K)\big)^2 I_{\alpha_k}(\mu_k) + \big(1- \big(\nu_k(K)\big)^2\big)(I_{\alpha_k}(\nu_k)+1).
\end{eqnarray*}
This inequality implies that
\begin{equation}\label{nm100}
I_{\alpha_k}(\mu_k)\leq I_{\alpha_k}(\nu_k) -\frac{1- \big(\nu_k(K)\big)^2}{\big(\nu_k(K)\big)^2}\leq I_{\alpha_k}(\nu_k)
\end{equation}
for every $k$. Since $(\mu_k)$ converges narrowly to $\nu$, as $k\to\infty$,
we have that $(\widehat\mu_k)$ pointwise converges to $\widehat\nu$ and by the Fatou Lemma
$$
\liminf_{k\to\infty} \int_{\R^n}
\widehat{W}_{\alpha_k}(\xi) |\widehat\mu_k(\xi)|^2\, d\xi \geq \int_{\R^n}
\widehat{W}_\ast(\xi) |\widehat\nu(\xi)|^2\, d\xi.
$$
By \eqref{nm100}, \eqref{Fourier:space}, and the continuity of the confinement term with respect to narrow convergence (on measures with compact support),
we obtain
$$
\liminf_{k\to\infty} I_{\alpha_k}(\nu_k) \geq \liminf_{k\to\infty} I_{\alpha_k}(\mu_k) \geq 
\int_{\R^n} \widehat{W}_\ast(\xi) |\widehat\nu(\xi)|^2\, d\xi + \int_{\mathbb{R}^n}
|x|^2  \,d\nu(x).
$$
Since by definition of $\Gamma$-convergence 
\begin{equation}\label{defG}
J_\ast (\nu)=\min\Big\{ \liminf_{k\to\infty} I_{\alpha_k}(\nu_k): \ (\nu_k)\rightharpoonup\nu \text{ narrowly}, (\alpha_k)\to-1^+\Big\},
\end{equation}
we deduce that
\begin{equation}\label{Gliminf}
J_\ast (\nu)\geq \int_{\R^n} \widehat{W}_\ast(\xi) |\widehat\nu(\xi)|^2\, d\xi + \int_{\mathbb{R}^n}
|x|^2  \,d\nu(x)
\end{equation}
for every $\nu\in \mathcal P(\R^n)$ with compact support.

On the other hand, to prove the opposite inequality in \eqref{Gliminf}, let us first consider $\nu\in \mathcal P(\R^n)\cap C^\infty_c(\R^n)$. By \eqref{dominW} we have that 
$$
0\leq \widehat{W}_{\alpha_k}(\xi) |\widehat\nu(\xi)|^2\leq C\widehat{W}_0(\xi) |\widehat\nu(\xi)|^2,
$$
which gives a domination in $L^1(\R^n)$, since $\widehat\nu\in\mathcal S$ and $\widehat{W}_0\in L^1_{\textrm{loc}}(\R^n)$ behaves as $1/|\xi|^2$ at infinity.
Therefore, by \eqref{aeW} and the Dominated Convergence Theorem
$$
\lim_{k\to\infty} \int_{\R^n}
\widehat{W}_{\alpha_k}(\xi) |\widehat\nu(\xi)|^2\, d\xi = \int_{\R^n}
\widehat{W}_\ast(\xi) |\widehat\nu(\xi)|^2\, d\xi
$$
for every $\nu\in \mathcal P(\R^n)\cap C^\infty_c(\R^n)$.
By \eqref{defG} and \eqref{Fourier:space} this implies that
$$
J_\ast (\nu)\leq  \int_{\R^n}
\widehat{W}_\ast(\xi) |\widehat\nu(\xi)|^2\, d\xi + \int_{\mathbb{R}^n}
|x|^2  \,d\nu(x)
$$
for every $\nu\in \mathcal P(\R^n)\cap C^\infty_c(\R^n)$.
Let now $\nu\in \mathcal P(\R^n)$ be a measure with compact support and let $\nu_\e$ be defined as in \eqref{convol}. Using the inequality above and the lower semicontinuity of $J_\ast$
with respect to narrow convergence, we obtain that
\begin{equation}\label{Glimsup1}
J_\ast (\nu)\leq \liminf_{\e\to0} J_\ast (\nu_\e)
\leq \liminf_{\e\to0} \int_{\R^n}
\widehat{W}_\ast(\xi) |\widehat{\nu_\e}(\xi)|^2\, d\xi + \int_{\mathbb{R}^n}
|x|^2  \,d\nu(x).
\end{equation}
Arguing exactly as in \eqref{trickJV}, we have that
\begin{equation}\label{Glimsup2}
\lim_{\e\to0} \int_{\R^n}
\widehat{W}_\ast(\xi) |\widehat{\nu_\e}(\xi)|^2\, d\xi
= \int_{\R^n} \widehat{W}_\ast(\xi) |\widehat\nu(\xi)|^2\, d\xi,
\end{equation}
even if the right-hand side is infinite.
Combining \eqref{Gliminf}--\eqref{Glimsup2}, we finally obtain \eqref{Jast-rep}.\smallskip

\noindent \textit{Step~4: Uniqueness of the minimiser of $J_\ast$.} 
We now use the representation \eqref{Jast-rep} to show that the minimiser of $J_\ast$ is in fact unique.

First of all, we prove that any minimiser of $J_\ast$ must have compact support. Indeed, assume by contradiction that $\nu_\ast$ is a minimiser of $J_\ast$
not compactly supported. Let $(\alpha_k)\subset (-1,n-2]$ be a sequence converging to $-1$ and let $(\nu_k)\subset\mathcal P(\R^n)$ be a recovery sequence for $\nu_\ast$, that is, such that $(\nu_k)$ converges to $\nu_\ast$
narrowly and
$$
\lim_{k\to\infty} I_{\alpha_k}(\nu_k)=J_\ast(\nu_\ast).
$$
In particular, $\sup_k I_{\alpha_k}(\nu_k)<+\infty$. We argue in a similar way as in \eqref{nm100}. By \eqref{bound:compact}
there exists a compact set $K\subset\R^n$ such that $0<\nu_\ast(K)<1$, $\nu_k(K)>0$ and
$$
W_{\alpha_k}(x-y) + \frac12(|x|^2+|y|^2)\geq  I_{\alpha_k}(\nu_k) +1 \qquad
\text{ for } (x,y)\not\in K\times K
$$
for every $k$. If we define $\mu_k$ as in \eqref{defmuK}, then 
$$
I_{\alpha_k}(\mu_k)\leq I_{\alpha_k}(\nu_k) -\frac{1- \big(\nu_k(K)\big)^2}{\big(\nu_k(K)\big)^2}
$$
for every $k$. Since by narrow convergence $\nu_k(K)\to\nu_\ast(K)$, as $k\to\infty$, we obtain
\begin{equation}\label{abs1001}
\liminf_{k\to\infty} I_{\alpha_k}(\mu_k)\leq J_\ast(\nu_\ast) -\frac{1- \big(\nu(K)\big)^2}{\big(\nu(K)\big)^2}
< J_\ast(\nu_\ast).
\end{equation}
On the other hand, by the minimality of $\mu_{\alpha_k}$ and by \eqref{fundG}
\begin{equation}\label{abs1002}
\liminf_{k\to\infty} I_{\alpha_k}(\mu_k) \geq \liminf_{k\to\infty} I_{\alpha_k}(\mu_{\alpha_k})= J_\ast(\mu_\ast).
\end{equation}
Since both $\mu_\ast$ and $\nu_\ast$ are minimisers of $J_\ast$, \eqref{abs1001} and \eqref{abs1002} give a contradiction.
On measures with compact support the representation \eqref{Jast-rep} holds
and the right-hand side of \eqref{Jast-rep} is strictly convex as a function of $\mu$. We, thus, conclude that
$\mu_\ast$ is the only minimiser of~$J_\ast$.
\end{proof}

\end{section}

\bigskip

\noindent \textbf{Acknowledgements.} 
JAC acknowledges support by the EPSRC Grant EP/P031587/1. 
JM and JV are supported by MDM-2014-044 (MICINN, Spain),
2017-SGR-395 (Generalitat de Catalunya),
and MTM2016-75390 (Mineco). 
MGM acknowledges support by the Universit\`a di Pavia
through the 2017 Blue Sky Research Project ``Plasticity at different scales: micro to macro"
and by GNAMPA--INdAM.
LR is partly supported by GNAMPA--INdAM through Projects 2018 and 2019.
LS acknowledges support by the EPSRC Grant EP/N035631/1. 

\bigskip

\end{document}